\newtheorem{theorem}{Theorem}
\newtheorem{lemma}[theorem]{Lemma}
\newtheorem{proposition}[theorem]{Proposition}
\theoremstyle{definition}
\newtheorem{corollary}[theorem]{Corollary}
\theoremstyle{remark}
\newtheorem{remark}[theorem]{Remark}
\numberwithin{equation}{section}
\newcommand{\R}{\mathbb{R}}
\newcommand{\dfn}[1]{\textit{#1}}
\begin{document}

\title[Constructions stemming from non-separating planar graphs ]{Constructions stemming from non-separating planar graphs and their Colin de Verdi\`ere invariant}

\date{\today}
\author{Andrei Pavelescu}
\address{Department of Mathematics and Statistics, University of South Alabama, Mobile, AL 36688}
\email{andreipavelescu@southalabama.edu}

\author{Elena Pavelescu}
\address{Department of Mathematics and Statistics, University of South Alabama, Mobile, AL 36688}
\email{elenapavelescu@southalabama.edu}

\maketitle

\begin{abstract}
A planar graph $G$ is said to be non-separating if there exists an embedding of $G$ in $\mathbb{R}^2$ such that for any cycle $\mathcal{C}\subset G$, all vertices of $G\setminus \mathcal{C}$ are within the same connected component of $\mathbb{R}^2\setminus \mathcal{C}$. 
Dehkordi and Farr classified the non-separating planar graphs as either outerplanar graphs, subgraphs of wheel graphs, or subgraphs of elongated triangular prisms.
We use maximal non-separating planar graphs to construct examples of maximal linkless graphs and maximal knotless graphs.
We show that for a maximal  non-separating planar graph $G$ with $n\ge 7$ vertices, the complement $cG$  is $(n-7)-$apex.
This implies that  the Colin de Verdi\`ere invariant of the complement $cG$ satisfies  $\mu(cG)  \le n-4$.
We show this to be an equality.
As a consequence, the conjecture of Kotlov, Lov\`asz, and Vempala that for a simple graph $G$,  $\mu(G)+\mu(cG)\ge n-2$ is true for 
2-apex graphs $G$ for which $G-\{u,v\}$ is planar non-separating.
It also follows that complements of non-separating planar graphs of order at least nine are intrinsically linked. 
We  prove that the complements of non-separating planar graphs $G$ of order at least ten are intrinsically knotted. 
 \end{abstract}
\vspace{0.1in}

\section{Introduction}

All graphs in this paper are finite and simple.
A graph is \dfn{intrinsically linked} (IL) if every embedding of it
in $\R^3$ (or $S^3$) contains a nontrivial 2-component link.
A graph is \dfn{linklessly embeddable} if it is not intrinsically linked (nIL).
A graph is \dfn{intrinsically knotted} (IK) if every embedding of it in $\R^3$ (or $S^3$) contains a nontrivial knot.
The combined work of Conway and Gordon \cite{CG},
Sachs \cite{Sa}, and Robertson, Seymour and Thomas \cite{RST} fully characterize IL graphs: a graph is IL if and only if it contains a graph in the Petersen family as a minor.
The Petersen family consists of seven graphs obtained from the complete graph $K_6$ by $\nabla Y-$moves and $Y\nabla-$moves, as described in Figure~\ref{fig-ty}. 
The $\nabla Y-$move  and the $Y\nabla-$move preserve the IL property. 
While $K_7$ and $K_{3,3,1,1}$ together with many other minor minimal IK graphs  have been found \cite{GMN}, \cite{CG}, \cite{Foisy}, a characterization of IK graphs is not fully known. 
While the $\nabla Y-$move preserves the IK property \cite{MRS}, the  $Y\nabla-$move doesn't preserve it \cite{FN}.
A graph is said to be $k-apex$ if it can be made planar by removing $k$ vertices. If $G$ and $H$ denote two simple graphs with vertex sets $V(G)$ and $V(H)$, and edge sets $E(G)$ and $E(H)$, respectively, then the \textit{sum} $G+H$ denotes the simple graph with vertex set $V(G)\sqcup V(H)$ and edge set $E(G) \sqcup E(H)\sqcup L$, where $L$ denotes the set of all edges with one endpoint in $V(G)$ and the other in $V(H)$.


\begin{figure}[htpb!]
\begin{center}
\begin{picture}(160, 42)
\put(0,0){\includegraphics[width=2.4in]{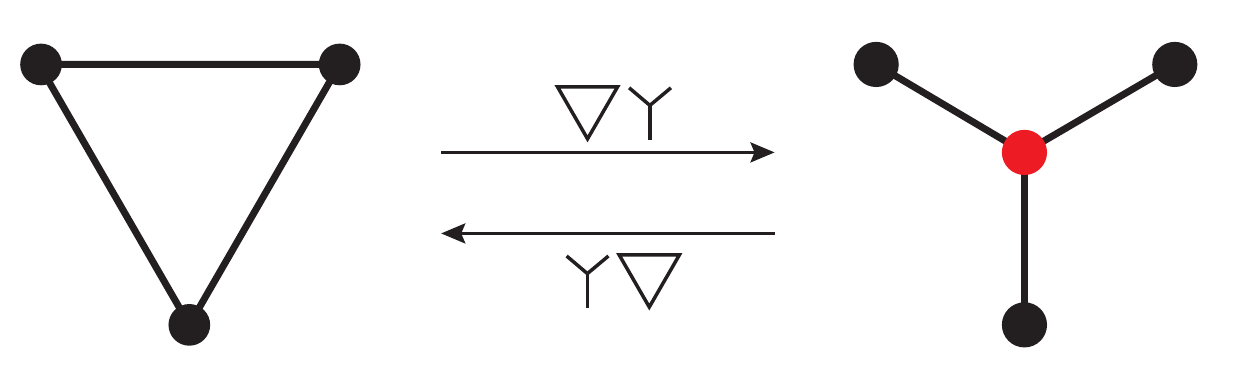}}
\end{picture}
\caption{\small  $\nabla Y-$ and $Y\nabla-$moves} 
\label{fig-ty}
\end{center}
\end{figure}

 A planar graph $G$ is \textit{non-separating} if there exists an embedding of $G$ in $\mathbb{R}^2$ such that for any cycle $\mathcal{C}\subset G$, all vertices of $G\setminus \mathcal{C}$ are within the same connected component of $\mathbb{R}^2\setminus \mathcal{C}$.
By work of Dehkordi and Farr \cite{DF}, a non-separating planar graph is one of three types: (1) an outerplanar graph, (2) a subgraph of a wheel, (3) a subgraph of an elongated triangular prism.  
In Section \ref{max}, we consider sums between maximal non-separating planar graphs and small empty graphs, complete graphs or paths to construct maximal linklessly embedable graphs and maximal knotlessly embeddable graphs. 
A simple graph $G$ is called \textit{maximal linklessly embeddable} (maxnIL) if it is not a proper subgraph of a nIL graph of the same order.
 A simple graph $G$ is called \textit{maximal knotlessly embeddable} (maxnIK) if it is not a proper subgraph of a nIK graph of the same order.
Constructions and properties of maxnIL graphs can also be found in \cite{A} and \cite{NPP}, and for maxnIK graphs in \cite{EFM}.

Colin de Verdi\`ere \cite{dV}  introduced the graph invariant $\mu$ which is based on spectral properties of matrices associated with  the graph $G$. 
He showed that $\mu$  is monotone under taking minors
and that planarity is characterized by the inequality $\mu\le 3$.  
By work of Lov\'asz and Schrijver  \cite{LS} and Robertson, Seymour, and Thomas \cite{RST}, it is known that linkless embeddability is characterized by the inequality $\mu\le 4$.
By reformulating the definition of $\mu$ in terms of vector labelings, Kotlov, Lov\'asz, and Vempala \cite{KLV} related the topological properties of a graph to the $\mu$ invariant of its complement: 
for $G$ a simple graph on $n$ vertices 
(a) if $G$ is planar, then $\mu(cG)\ge n-5$;
(b) if $G$ is outerplanar, then $\mu(cG)\ge n-4$;
(c) if $G$ is a disjoint union of paths then $\mu(cG)\ge n-3.$
For $G$ a graph with $n$ vertices $v_1, v_2, \ldots v_n$,  $cG$ denotes the \textit{complement of $G$} in the complete graph $K_n$. 
The graph $cG$ has the same set of vertices as $G$ and $E(cG) = \{ v_iv_j | v_iv_j\notin E(G)\}$.

By \cite{BHK}, the complement of a planar graphs with nine vertices is not planar.
This is also implied by the inequality $\mu(cG)\ge n-5$. 
Here we show a stronger inequality  for maximal non-separating planar graphs. 
In Section \ref{mu}, we prove two theorems.

\begin{theorem}
If $G$ is a maximal non-separating planar graph with $n\ge 7$ vertices, then $cG$ is $(n-7)-$apex.
\label{n7ap}
\end{theorem}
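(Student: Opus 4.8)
The plan is to combine the Dehkordi--Farr classification with the observation that, to show $cG$ is $(n-7)$-apex, it suffices to exhibit a set $T$ of seven vertices for which the induced subgraph $(cG)[T]$ is planar, since deleting the remaining $n-7$ vertices then yields a planar graph. For $u,v\in T$ one has $uv\in E(cG)\iff uv\notin E(G[T])$, so $(cG)[T]$ is exactly the complement inside $K_7$ of $G[T]$, and the goal becomes: find seven vertices whose induced subgraph in $G$ has planar complement. A convenient reduction is monotonicity of the complement under taking subgraphs: if $G[T]$ contains a spanning subgraph $H$ on $T$, then $(cG)[T]\subseteq cH$, so it is enough that $G[T]$ contain \emph{some} seven-vertex graph whose complement is planar. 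By Dehkordi and Farr \cite{DF}, a maximal non-separating planar graph is a maximal outerplanar graph, a wheel, or an elongated triangular prism, and I would treat these three families in turn.

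The key lemma is that the complement of a maximal outerplanar graph on seven vertices (a triangulated heptagon, with $11$ edges) is planar. I would prove this either by the short finite check over the triangulation types of a heptagon, or structurally: the complement has only $10$ edges; since a maximal outerplanar graph is $2$-connected, $G-v$ is connected for each vertex $v$, which forbids splitting six vertices into two triples with all cross pairs non-edges of $G$, and hence forbids a $K_{3,3}$ subgraph; the tight edge count, together with the fact that branch sets of a minor must be connected, then rules out $K_5$ and $K_{3,3}$ minors, giving planarity (in practice each such complement reduces to a subdivision of $K_4$ with pendant or isolated vertices). This lemma settles two of the families. For a maximal outerplanar $G$, the dual tree of the triangulation has $n-2\ge 5$ nodes, so I can choose a connected set of five triangles; their union is a triangulated heptagon on a set $T$ of seven vertices whose $11$ edges all lie in $G$, whence $G[T]$ contains a triangulated heptagon and $(cG)[T]$ is planar. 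For the wheel $W_{n-1}$ with $n\ge 8$, taking the hub together with six consecutive rim vertices makes $G[T]$ the fan on seven vertices (a hub joined to a path $P_6$), which is precisely the fan triangulation of a heptagon, so again $(cG)[T]$ is planar; the remaining case $W_6$ (where $n=7$) is checked directly, $cW_6$ being the disjoint union of an isolated vertex with the complement of $C_6$, namely a triangular prism, hence planar.

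The remaining and hardest family is the elongated triangular prism of \cite{DF}, two triangular ends joined by a prismatic band of quadrilateral faces. Here the heptagon lemma does not apply, since the only triangular faces occur at the two ends and no seven vertices induce a triangulated heptagon, so a direct computation is forced. I would select seven \emph{local} vertices near a single triangular end --- one end-triangle together with the adjacent portion of the band --- rather than vertices drawn from both ends, since two far-apart triangles induce a complement containing $K_{3,3}$ and must be avoided. For such a local window the induced graph $G[T]$ is the same short graph for every $n$, and I would verify that its $12$-edge complement is planar by exhibiting a Hamiltonian cycle in $(cG)[T]$ and checking that the conflict graph of the remaining chords (two chords conflicting when their endpoints interleave on the cycle) is bipartite, which produces an explicit planar embedding. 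The small orders for which the band is too short to contain a full local window are handled by inspecting the finitely many graphs directly. The main obstacle I anticipate lies in this last case: fixing the precise graph from \cite{DF}, choosing the seven retained vertices so that $G[T]$ is connected and dense enough, and carrying out the planarity verification of its densest ($12$-edge) complement; by contrast the outerplanar and wheel cases follow cleanly once the heptagon lemma is established.
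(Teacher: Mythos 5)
Your overall strategy --- reduce to the Dehkordi--Farr classification and, in each family, exhibit seven vertices whose induced complement is planar --- is exactly the paper's, and your outerplanar and wheel cases are correct and essentially the paper's argument: your heptagon lemma is the paper's base-case claim that the complement of any maximal outerplanar graph on seven vertices is planar, and your dual-tree pruning is the paper's recursive deletion of vertices isolated by $2$-chords, read in the other direction. (Your structural sketch of the heptagon lemma is loose --- a tight edge count alone does not rule out $K_{3,3}$ subdivisions --- but the finite check you offer first is sound and suffices.)

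The genuine gap is in the elongated prism case. Your plan rests on the claim that ``for such a local window the induced graph $G[T]$ is the same short graph for every $n$,'' with exceptions confined to ``small orders for which the band is too short.'' Both claims fail: an elongated prism may carry all of its subdivision vertices on only one of the three non-triangular edges, and this happens for arbitrarily large $n$, not just small orders. In that configuration, any seven vertices chosen near the end-triangle $v_1v_3v_5$ necessarily include two adjacent vertices of the far triangle (the spoke-neighbors of $v_3$ and $v_5$ are $v_4$ and $v_6$ themselves, joined by the edge $v_4v_6$), so the window graph is not one fixed graph to verify but a family indexed by the subdivision pattern; your ``avoid both ends'' heuristic cannot even be followed here. (A symptom: the sparsest window you describe has $7$ edges, so its densest complement has $14$ edges, not $12$.) As it happens, each of these window complements is planar, so your approach could be completed by a case analysis over subdivision patterns, but that analysis is absent and the uniformity claim it relies on is false as stated. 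The paper sidesteps all of this with one observation you are missing: take only the six vertices $v_1, v_3, v_5$ and their spoke-neighbors $a,b,c$; no matter how the prism is subdivided, the complement induced on these six is a subgraph of the triangulated hexagon, a maximal \emph{outerplanar} graph, and an outerplanar graph plus any single additional vertex with arbitrary adjacencies is planar. Hence deleting any $n-7$ vertices disjoint from this six-set leaves a planar graph --- one uniform argument covering every elongated prism, with no dependence on where the subdivision vertices sit.
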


\noindent Theorem 1 establishes the upper bound $\mu(cG)\le n-4$  for $G$ a maximal non-separating planar graph, since $\mu\le 3$ for planar graphs and adding one vertex increases the value of $\mu$ by at most one \cite{HLS}.
We prove this  is an equality.

\begin{theorem}
For $G$ a maximal non-separating planar graph with $n\ge 7$ vertices, $\mu(cG)=n-4$.
\label{thmn-4}
\end{theorem}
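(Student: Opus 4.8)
The plan is to pair the upper bound already available with a matching lower bound derived case by case from the Dehkordi--Farr trichotomy. Theorem~\ref{n7ap} shows $cG$ is $(n-7)$-apex, and since $\mu\le 3$ for planar graphs while adjoining one vertex raises $\mu$ by at most one \cite{HLS}, this yields $\mu(cG)\le n-4$. It remains to prove $\mu(cG)\ge n-4$. By \cite{DF} a maximal non-separating planar graph $G$ is either (1) outerplanar, (2) the wheel $W_{n-1}=K_1+C_{n-1}$, or (3) a maximal elongated triangular prism. In case (1) the bound (b) of Kotlov, Lov\'asz, and Vempala \cite{KLV} gives $\mu(cG)\ge n-4$ at once.

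For case (2) I would reduce the problem to the complement of a cycle. Complementation turns the join into a disjoint union, so $c(W_{n-1})=\{v_0\}\sqcup c(C_{n-1})$ with $v_0$ the former hub. Deleting $v_0$ is a minor operation, so minor--monotonicity of $\mu$ gives $\mu(cG)\ge \mu(c(C_{n-1}))$, and it suffices to prove $\mu(c(C_m))\ge m-3$ for $m=n-1\ge 6$. This exceeds by one both the outerplanar bound (b) applied to $C_m$ and the bound the conjecture $\mu(G)+\mu(cG)\ge n-2$ would give (as $\mu(C_m)=2$); hence no soft argument will do, and I would build a Colin de Verdi\`ere matrix by hand.

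Concretely, equip $\R^3$ with the nondegenerate form $B=\mathrm{diag}(1,1,-1)$ and set $y_k=(a\cos\theta_k,\,a\sin\theta_k,\,b)$ for $\theta_k=2\pi k/m$ and $b^2=a^2\cos(2\pi/m)>0$. Let $Y$ be the $3\times m$ matrix with columns $y_k$ and $M=Y^{\mathsf T}BY$. Then
\[
M_{jk}=a^2\cos(\theta_j-\theta_k)-b^2,
\]
which vanishes exactly when $j,k$ are consecutive modulo $m$ and is strictly negative for every non-consecutive $j\ne k$; thus $M$ realizes the sign pattern of $c(C_m)$ with the diagonal left free. Since the columns of $Y$ span $\R^3$, the form $z\mapsto z^{\mathsf T}Mz$ is the pullback of $B$ under the surjection $Y$, so $M$ has rank $3$, signature $(2,1)$, hence exactly one negative eigenvalue and corank $m-3$, and $\ker M=\ker Y$. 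For the Strong Arnold Property, any symmetric $X$ supported off the diagonal only on consecutive pairs with $MX=0$ has every column in $\ker Y$; the $k$-th column equation $X_{k-1,k}\,y_{k-1}+X_{k+1,k}\,y_{k+1}=0$ together with the linear independence of $y_{k-1},y_{k+1}$ forces $X=0$. Therefore $\mu(c(C_m))=m-3$, settling case (2).

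Case (3) I would treat in the same spirit: read off the complement of the maximal elongated triangular prism from \cite{DF}, isolate its dense ``cyclic'' part, and produce an explicit $B$-Gram matrix of rank $3$ with a single negative eigenvalue attaining corank $n-4$ (equivalently, exhibit a $c(C_{n-1})$-type minor of $cG$). I expect this to be the main obstacle: the prism's complement does not peel off an isolated vertex as cleanly as the wheel's, so the point configuration must be tailored to its sign pattern and the Strong Arnold Property re-established for the resulting matrix. Once $\mu(cG)\ge n-4$ holds in all three cases, combining it with the upper bound from Theorem~\ref{n7ap} yields $\mu(cG)=n-4$.
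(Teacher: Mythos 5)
Your treatment of cases (1) and (2) is sound, but the proposal does not actually prove case (3): for the elongated prism you offer only a plan (``read off the complement\ldots produce an explicit $B$-Gram matrix\ldots I expect this to be the main obstacle''), with no construction, no minor, and no citation that delivers $\mu(cG)\ge n-4$. Since the Dehkordi--Farr trichotomy makes the prism case a full third of the theorem --- and it is the case not covered by any off-the-shelf bound of \cite{KLV} --- this is a genuine gap, not a routine verification. Moreover, the gap is easier to fill than you anticipate, and no tailored point configuration is needed; this is essentially your own parenthetical suggestion carried out. In the paper's argument one chooses two subdivided long edges of the prism, say $v_1v_2$ and $v_5v_6$, and deletes the endpoints $v_1$ and $v_6$ lying in different triangles: the remaining graph $H=G-\{v_1,v_6\}$ is a path on $n-2$ vertices, so $\mu(cH)\ge n-5$ by bound (c) of \cite{KLV} (or Theorem 5.5 of \cite{HLS}). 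Since $v_1v_6\notin E(G)$ and $v_1,v_6$ have disjoint neighborhoods in $G$, the edge $v_1v_6$ of $cG$ can be contracted to a vertex adjacent to every vertex of $cH$; hence $cG$ has a minor containing the cone $cH+K_1$, and Theorem 2.7 of \cite{HLS} gives $\mu(cG)\ge\mu(cH)+1\ge n-4$. (Indeed this cone contains a $c(C_{n-1})$ subgraph, exactly the minor you hoped for.) Two sub-cases your sketch misses entirely: when only one long edge is subdivided, $H=G-\{v_1,v_2\}$ is a $4$-cycle plus a path and the same coning works provided at least two subdivision vertices exist; and when there is exactly one subdivision vertex (forcing $n=7$) the disjoint-neighborhood condition fails, and one instead exhibits a $K_4$ minor of $cG$, which suffices since $n-4=3$.

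Where you are complete, you genuinely diverge from the paper. For the wheel, the paper deletes the isolated vertex and quotes Theorem 5.5 of \cite{HLS} in the form $\nu(C_{n-1})\le 2$, whereas you build an explicit Colin de Verdi\`ere matrix $M=Y^{\mathsf T}BY$ for $c(C_m)$: the sign pattern, the inertia count (rank $3$, one negative eigenvalue, via surjectivity of $Y$ and nonsingularity of $B$), and the Strong Arnold Property check from the linear independence of $y_{k-1},y_{k+1}$ are all correct, so $\mu(c(C_m))\ge m-3$ follows self-containedly where the paper relies on a citation. That is a nice, more constructive route for case (2), but it does not offset the missing prism case, which is where the theorem's real content lies.
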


\noindent  In \cite{KLV}, Kotlov, Lov\`asz, and Vempala conjectured that, for a simple graph $G$,  $\mu(G)+\mu(cG)\ge n-2$. 
We revisit results about $\mu$ to show  the conjecture is true for planar graphs and 1-apex graphs.
As a consequence of Theorem 2, the conjecture holds for 2-apex graphs $G$ for which $G-\{u,v\}$ is planar non-separating.
Theorem 2  also implies that  for $G$ a maximal non-separating planar graph with nine vertices, $\mu(cG)=5>4$, and thus $cG$ is intrinsically linked.
While the relationship between the $\mu$ invariant and intrinsic linkness is well understood, the same is not true for intrinsic knottedness. 
The inequality $\mu(cG)\ge n-5$ for planar graphs $G$, implies that complements of planar graphs with ten vertices are intrinsically linked.
Theorem 2 establishes that for $G$ a maximal non-separating planar graph with ten vertices, $\mu(cG)=6$, but this does not imply that $cG$ is intrinsically knotted.
There are known IK graphs with $\mu=5$ \cite{F}, \cite{MNPP}, as well as nIK graphs with $\mu=6$ \cite{FN}.
 In Sections 4 we do a case by case analysis to prove the following theorem.
\begin{theorem}
If $G$  is  a non-separating planar graph on 10 vertices, then $cG$ is intrinsically knotted.
\label{thm10}
\end{theorem}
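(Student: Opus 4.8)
The plan is to reduce Theorem~\ref{thm10} to finitely many graphs and then certify intrinsic knottedness by producing a known IK graph as a minor. The starting point is that being IK is inherited by supergraphs on a fixed vertex set: if $H\subseteq H'$ with $V(H)=V(H')$ and $H$ is IK, then so is $H'$, since every embedding of $H'$ restricts to an embedding of $H$. Given a non-separating planar graph $G$ on ten vertices, $G$ is a spanning subgraph of some maximal non-separating planar graph $G'$ on the same vertices, so $cG'\subseteq cG$ with $V(cG')=V(cG)$, and it suffices to prove $cG'$ is IK. By the Dehkordi--Farr classification \cite{DF}, sharpened to the maximal case in Section~\ref{max}, every such $G'$ is a maximal outerplanar graph (a triangulated $10$-gon), the wheel on ten vertices, or a maximal subgraph of the elongated triangular prism on ten vertices. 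This replaces the statement by a structured family of complements, each equal to $K_{10}$ minus the corresponding non-separating planar graph.

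For each case I would exhibit a minor isomorphic to a minor-minimal IK graph; recall $K_7$ and $K_{3,3,1,1}$ are IK by Conway and Gordon \cite{CG}, and that the $\nabla Y$-move produces further IK graphs from them since it preserves intrinsic knottedness \cite{MRS}. A graph with an IK minor is itself IK, so it is enough to locate connected branch sets realizing such a graph. A density argument alone will not close the outerplanar case: a $10$-vertex graph with no $K_7$ minor may have up to $5\cdot 10-15=35$ edges, whereas $cG'$ for a triangulated $10$-gon has only $45-17=28$ edges, so the minors must be read off from the combinatorial structure of $G'$ rather than from edge counts.

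I would isolate the wheel as a genuinely exceptional case and treat it first. If $G'$ is the wheel, its hub is adjacent to all other vertices, so $cG'$ is the complement $cC_9$ of a $9$-cycle together with an isolated vertex. Here neither $K_7$ nor $K_{3,3,1,1}$ occurs as a minor: seven branch sets in $cC_9$ force at least $2\cdot 7-9=5$ singletons, which would be an independent set of size five in $C_9$, impossible since $\alpha(C_9)=4$; and a $K_{3,3,1,1}$ minor would force a branch set adjacent to all others, which $cC_9$ cannot supply once its nine vertices are distributed among eight branch sets. I would therefore certify $cC_9$ by a nine-vertex minor-minimal IK graph obtained from $K_7$ by $\nabla Y$-moves, writing out the branch sets explicitly; this is a single, delicate finite check, to which I would append the finitely many elongated-prism complements, handled the same way.

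The main obstacle is the outerplanar family, where there are many combinatorially distinct triangulations of the $10$-gon and, as noted, no uniform density bound. For a triangulation $T$ the non-edges of $cG'$ are exactly the $10$ boundary edges and the $7$ diagonals of $T$, so $cG'$ is $K_{10}$ minus a sparse, $2$-degenerate graph. The plan is to choose seven connected branch sets so that between any two of them some pair of vertices avoids both the boundary cycle and the diagonals of $T$; the contraction is then $K_7$. Since the singletons among the branch sets must form an independent set of $G'$, and $G'$ is $3$-colorable with $\alpha(G')\ge 4$, such a choice is not obstructed as it was for the wheel, and because only seven diagonals are ever deleted beyond the boundary it should always be available. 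The hard part will be making this uniform over all $T$: I expect to organize the triangulations by the local configuration near their ears, reduce to a few representative diagonal patterns, and verify in each that the seven diagonals can be distributed among the branch sets without isolating any pair, which is the crux of the case analysis.
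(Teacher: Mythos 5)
Your overall skeleton matches the paper's: reduce to maximal non-separating planar graphs, split into the three Dehkordi--Farr families, and certify intrinsic knottedness of each complement by exhibiting minors of known minor-minimal IK graphs. However, the crux of your plan for the outerplanar family --- that the complement of \emph{every} triangulated $10$-gon contains a $K_7$ minor --- is false, so the case analysis you defer to cannot be completed as described. Take the zig-zag (snake) triangulation $T$ with chords $v_1v_3$, $v_3v_{10}$, $v_4v_{10}$, $v_4v_9$, $v_5v_9$, $v_5v_8$, $v_6v_8$. Any $K_7$ minor in a $10$-vertex graph has at least four singleton branch sets, and singletons must form an independent set of $T$; one checks that $T$ has a \emph{unique} independent set of size four, namely $\{v_2,v_5,v_7,v_{10}\}$, so the remaining three branch sets are pairs partitioning $\{v_1,v_3,v_4,v_6,v_8,v_9\}$. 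Each pair must contain a vertex non-adjacent to $v_{10}$ in $T$, but among those six vertices only $v_6$ and $v_8$ qualify, so at most two of the three pairs can be adjacent to $\{v_{10}\}$ in $cT$: no $K_7$ minor exists. This is precisely why the paper's outerplanar lemma does not rely on $K_7$ minors alone; it uses $K_{3,3,1,1}$ minors, graphs obtained from $K_7$ or $K_{3,3,1,1}$ by $\nabla Y$-moves (including Cousin 12 of \cite{GMN}), and --- in configurations like the snake --- the device of deleting edges at a vertex and performing a $Y\nabla$-move so that the \emph{resulting} graph has a $K_7$ minor, which is legitimate because the inverse $\nabla Y$-move preserves the IK property.

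There is a second gap in your wheel case. Your diagnosis that $cC_9$ has neither a $K_7$ nor a $K_{3,3,1,1}$ minor is correct, but the certificate you propose --- a nine-vertex minor-minimal IK graph obtained from $K_7$ by $\nabla Y$-moves --- does not exist inside $cC_9$. The only nine-vertex graphs obtained from $K_7$ by $\nabla Y$-moves are the two graphs gotten by splitting two triangles of $K_7$ (disjoint, or sharing one vertex), and since a nine-vertex minor of the nine-vertex graph $cC_9$ is necessarily a subgraph, you would need one of these as a subgraph of $cC_9$, i.e., a Hamiltonian cycle in its complement. In both complements this fails: the vertices of $K_7$ lying in neither triangle have degree two with both new vertices as their only neighbors, which forces either a $4$-cycle through the two new vertices or a path trapped inside a triangle component. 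The paper instead uses $E_9+e$ from \cite{GMN}, which lies outside your announced toolkit: $E_9$ arises in the $K_7$ family only via $Y\nabla$-moves as well, is itself not IK, and the extra edge $e$ is essential. Finally, the elongated prism case is deferred entirely in your proposal ("handled the same way"), although that is the one family where direct $K_{3,3,1,1}$ minors do suffice, as the paper shows.
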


Since the complement of a non-separating planar graph contains the complement of a maximal non-separating planar graph of the same order as a subgraph,  it suffices to prove Theorem \ref{thm10}  for maximal non-separating planar graphs: (1) maximal outerplanar graphs, (2) the  wheel graph, (3) elongated triangular prisms.

A similar approach to that presented in Section \ref{ten} works to prove that (a) if $G$  is a non-separating planar graph on 7 vertices, then $cG$ is not outerplanar; (b) if  $G$  is  a non-separating planar graph on 8 vertices, then $cG$ is non-planar; (c)  if $G$  is a non-separating planar graph on 9 vertices, then $cG$ is intrinsically linked.
For outerplanar graphs $G$ with at most nine vertices, these results can also be obtained  using the graph invariant $\mu$, since for such  graphs $G$, $\mu(G) \ge n-4$ \cite{LS}.

\section{MaxnIL and maxnIK graphs}
\label{max}

In this section, we use maximal non-separating planar graphs to build examples of maxnIL and maxnIK graphs.
J{\o}rgensen \cite{J}, and Dekhordi and Farr \cite{DF} considered the class of graphs of the type $H+E_2$ where $E_2$ denotes the graph with two vertices and no edges, and $H$ is an elongated prism. J{\o}rgensen proved that these graphs are maximal with no $K_6$ minors. 
Dekhordi and Farr proved that these graphs are maxnIL. 
Here we add to this type of examples by taking the sum of maximal non-separating planar graphs with small empty graphs, complete graphs, and paths.
We  use results of Sachs' \cite{Sa} saying that 1-apex graphs are nIL and 2-apex graphs are nIK.
In several cases, we prove maximality with Mader's theorem  \cite{Ma} on existence of complete minors: a graph $G$ with $n$ vertices  and $4n-9$ edges, $n\ge 6$,  contains a $K_6$ minor;  a graph $G$ with $n$ vertices  and $5n-14$ edges, $n\ge 7$,  contains a $K_7$ minor.

 A vertex of a graph $H$  which is incident to all the other vertices of $H$ is a \textit{cone}. We also say that $v$ cones over the subgraph induces by all the vertices of $H$ minus $v$. Let $W_n$ denote the wheel graph of order $n\ge 4$. 

\begin{theorem} 
 The graph $G\simeq W_n+E_2$ is maxnIL.
\end{theorem}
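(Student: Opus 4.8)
The plan is to establish the two halves of the definition of a \emph{maxnIL} graph in turn: that $G\simeq W_n+E_2$ is linklessly embeddable, and that adjoining any single edge to $G$ produces an intrinsically linked graph.

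For the first half, I would exhibit $G$ as a $1$-apex graph and then apply Sachs' theorem \cite{Sa} that $1$-apex graphs are nIL. Write $W_n$ as its hub $c$ (a cone over the rim cycle $C_{n-1}$) together with that cycle, and let $u,v$ be the two vertices of $E_2$, each of which cones over $W_n$. Deleting the hub $c$ leaves precisely $C_{n-1}+E_2$: the rim cycle with two nonadjacent vertices $u,v$, each joined to every rim vertex. This graph is planar---draw $C_{n-1}$ as a convex polygon, place $u$ in the bounded region and $v$ in the unbounded region, and join each to all rim vertices without crossings (it is the $(n-1)$-gonal bipyramid). Hence $G-c$ is planar, $G$ is $1$-apex, and $G$ is nIL.

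For the second half I would count edges and invoke Mader's theorem \cite{Ma}. Setting $N:=n+2$, the graph $G$ has $N$ vertices and $(2n-2)+2n=4n-2=4N-10$ edges, namely the $2n-2$ edges of $W_n$ together with the $2n$ join edges contributed by $u$ and $v$. Therefore any proper supergraph $H$ of $G$ on the same vertex set has at least $4N-9$ edges; since $n\ge 4$ forces $N\ge 6$, Mader's theorem produces a $K_6$ minor in $H$, so $H$ is IL. Because every graph properly containing $G$ on $V(G)$ contains some single-edge extension $G+e$, and each such $G+e$ already has $4N-9$ edges and is thus IL, the graph $G$ is not a proper subgraph of any nIL graph of its order; that is, $G$ is maxnIL.

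I expect no conceptual obstacle here; the delicate points are purely at the boundary of the stated range and in the bookkeeping. One should confirm that the edge count is exactly $4N-10$, so that adding a single edge lands precisely on Mader's threshold $4N-9$, and one should check the degenerate case $n=4$, where $G=W_4+E_2=K_6-e$, deletion of the hub gives $K_5-e$ (the triangular bipyramid, which is planar), and $N=6$ meets the hypothesis of Mader's theorem. The planarity of the bipyramid and the closure of intrinsic linking under edge additions are routine.
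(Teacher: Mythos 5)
Your proof is correct and takes essentially the same approach as the paper's: delete the hub vertex to exhibit $G$ as a $1$-apex graph (hence nIL by Sachs), then use the edge count $4|V(G)|-10$ together with Mader's theorem to show any added edge forces a $K_6$ minor. Your write-up simply makes explicit details the paper leaves implicit, such as the planar (bipyramid) embedding of $C_{n-1}+E_2$, the arithmetic of the edge count, and the boundary case $n=4$.
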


\begin{proof}  Let $\{v_1, v_2,....,v_n\}$ denote the vertices of $W_n$, and assume that $v_n$ is adjacent to $v_i$, for $1\le i \le n-1$.
Removing the vertex $v_n$ from $G$  yields the subgraph $C_{n-1}+E_2$ which admits a planar embedding, thus $G$ is an 1-apex graph. By \cite{Sa}, $G$ is nIL. Maximality follows by Mader \cite{Ma}, since $G$ has $4|V(G)|-10$ edges. Any added edge creates a $K_6$ minor.

\end{proof}

Let $P_2$ be the graph with vertex set $V(P_2)=\{u,v,w\}$, and edge set $E(P_2)=\{\{u,w\},\{v,w\}\}$. Let $K_3$ denote the complete graph on vertices $\{u,v,w\}$.

\begin{theorem} 
 The graph  $G\simeq W_n+P_2$ is maxnIK.
\end{theorem}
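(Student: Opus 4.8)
The plan is to prove the two assertions comprising ``maxnIK'' separately: that $G$ is knotlessly embeddable, and that adjoining any missing edge renders it intrinsically knotted.

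For the first, I would show that $G$ is $2$-apex and then invoke Sachs' result that $2$-apex graphs are nIK. Label the vertices of $W_n$ as $\{v_1,\dots,v_n\}$ with $v_n$ the hub, and recall that in $G$ each of $u,v,w$ is joined to all of $V(W_n)$, while inside $P_2$ only $uw$ and $vw$ are edges. Deleting the hub $v_n$ and the center $w$ leaves the rim cycle $C_{n-1}$ together with $u$ and $v$, where $u$ and $v$ are each adjacent to every rim vertex and non-adjacent to one another. That is, $G-\{v_n,w\}\simeq C_{n-1}+E_2$, which is planar --- exactly the planar graph already used in the proof of the preceding theorem. Hence $G$ is $2$-apex, and therefore nIK.

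For maximality I would combine an edge count with Mader's theorem. The wheel $W_n$ has $2(n-1)$ edges, $P_2$ has $2$ edges, and the join contributes $3n$ edges, so $|E(G)|=5n$; writing $N:=|V(G)|=n+3$ this is $|E(G)|=5N-15$. Adjoining any single non-edge therefore yields a graph on $N\ge 7$ vertices with $5N-14$ edges, which by Mader's theorem contains a $K_7$ minor. Since $K_7$ is intrinsically knotted and a graph possessing an IK minor is itself IK, every graph obtained from $G$ by adding an edge is IK. Thus $G$ is not a proper subgraph of any nIK graph on the same vertex set, i.e. $G$ is maxnIK.

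There is no substantial obstacle here: the content lies entirely in checking that the deletion yields the planar graph $C_{n-1}+E_2$ and in the edge count. The one point worth verifying is that the Mader bound is already sharp at the smallest case $n=4$, where $W_4\simeq K_4$ and $G\simeq K_7-uv$; here adding the single missing edge $uv$ returns $K_7$ outright, in agreement with the general count $5N-14=21=\binom{7}{2}$.
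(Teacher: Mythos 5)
Your proposal is correct and follows essentially the same route as the paper: delete the hub $v_n$ and the center $w$ of $P_2$ to exhibit $G$ as $2$-apex (with planar part $C_{n-1}+E_2$), hence nIK by Sachs, and then use the edge count $|E(G)|=5|V(G)|-15$ together with Mader's theorem to conclude that any added edge forces a $K_7$ minor, hence an IK graph. Your explicit verification of the edge count and of the extremal case $n=4$ is a welcome addition, but the argument is the same as the paper's.
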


\begin{proof} Let $\{v_1, v_2,....,v_n\}$ denote the vertices of $W_n$, and assume that $v_n$ is adjacent to $v_i$, for $1\le i \le n-1$. 
By removing the vertices $v_n$ and $w$ from $G$, one obtains the planar graph $C_{n-1}+E_2$.
Thus $G$ is a 2-apex graph, hence nIK by \cite{Sa}. 
Maximality follows by Mader \cite{Ma}, since $G$ has $5|V(G)|-15$ edges and any added edge creates a $K_7$ minor.
\end{proof}

\begin{theorem}  If $H$ is a maximal outerplanar graph of order $n\ge 4$, then $G\simeq H+K_2$ is a maxnil graph.
\end{theorem}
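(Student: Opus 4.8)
The plan is to mirror the structure of the two preceding theorems: first exhibit $G$ as a $1$-apex graph to get nIL, then force maximality with Mader's edge bound. Write $V(K_2)=\{u,v\}$ with $uv\in E(K_2)$, so that $G=H+K_2$ has $N:=n+2$ vertices. To prepare the Mader argument I would first count edges. A maximal outerplanar graph on $n\ge 3$ vertices is a triangulated polygon and contributes $2n-3$ edges; $K_2$ contributes $1$; and the join contributes $2n$ edges. Hence $|E(G)|=(2n-3)+1+2n=4n-2=4N-10$.

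For the linkless part, I would delete one of the two apex vertices, say $v$. The resulting graph is $H+K_1$, the cone over $H$ with apex $u$. Since $H$ is outerplanar, it admits a plane embedding with all vertices on the outer face, and $u$ can be placed in the unbounded region and joined to every vertex of $H$ without crossings; indeed, outerplanarity of $H$ is exactly the condition under which the cone $H+K_1$ is planar. Thus $G$ is $1$-apex, and by Sachs \cite{Sa} it is nIL.

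For maximality, the point is that $|E(G)|=4N-10=(4N-9)-1$ sits exactly one edge below Mader's threshold. If $G$ were a proper subgraph of a nIL graph $G'$ on the same vertex set, then $|E(G')|\ge 4N-9$ with $N=n+2\ge 6$, so by Mader \cite{Ma} the graph $G'$ would contain a $K_6$ minor and hence be IL, a contradiction. Therefore no edge can be added to $G$ while preserving linkless embeddability, and $G$ is maxnIL.

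I do not expect a genuine obstacle: the argument is a direct analogue of the $W_n+E_2$ and $W_n+P_2$ cases. The only two points requiring care are the structural fact that deleting a single apex planarizes $G$ (equivalently, that the cone over an outerplanar graph is planar) and the arithmetic verification that the edge count lands precisely at $4N-10$, one short of the Mader bound $4N-9$. Both are routine, and it is the tightness of this count that makes the maximality conclusion immediate.
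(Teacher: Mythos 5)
Your proof is correct and takes essentially the same route as the paper: delete one vertex of $K_2$ so that the remaining graph is the planar cone $H+K_1$, making $G$ $1$-apex and hence nIL by Sachs, then use the edge count $4|V(G)|-10$ together with Mader's theorem to conclude that any added edge forces a $K_6$ minor, so $G$ is maxnIL. Incidentally, your arithmetic also silently corrects a typo in the paper, which states the edge count as $|V(H+K_2)|-10$ instead of $4|V(H+K_2)|-10$.
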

\begin{proof}
Since $H$ is outerplanar, $H+K_1$ is planar and $H+K_2$ is 1-apex and therefore nIL \cite{Sa}.
As $H+K_2$ has $|V(H+K_2)| - 10$ edges, it is maxnil \cite{Ma}.
\end{proof}

\begin{theorem} 
 If H is a maximal outerplanar graph of order $n\ge 4$, then $G\simeq H+K_3$ is a maxnIK graph.
\end{theorem}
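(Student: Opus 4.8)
The plan is to mirror the two-part structure of the two preceding theorems: first exhibit $G$ as a 2-apex graph so that it is nIK by Sachs \cite{Sa}, and then use an edge count against Mader's $K_7$ theorem \cite{Ma} to establish maximality.

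For the first part, the key observation is that coning an outerplanar graph yields a planar graph, so $H+K_1$ is planar. Writing $V(K_3)=\{u,v,w\}$, I would delete $v$ and $w$ from $G\simeq H+K_3$; what remains is exactly $H+K_1$ (the cone over $H$ with apex $u$), which is planar. Hence $G$ is 2-apex, and so $G$ is nIK.

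For maximality, I would count the edges of $G$. Using the fact that a maximal outerplanar graph on $n$ vertices has $2n-3$ edges, and adding the $3$ edges of $K_3$ together with the $3n$ edges joining $H$ to $K_3$, the total is $5n=5|V(G)|-15$ (since $|V(G)|=n+3$). This lands exactly one below the threshold $5|V(G)|-14$ at which Mader \cite{Ma} guarantees a $K_7$ minor, and the hypothesis $n\ge 4$ ensures $|V(G)|\ge 7$ so that Mader applies. Since all $K_3$-edges and all edges between $H$ and $K_3$ are already present, every non-edge of $G$ lies within $H$; adding any one of them raises the edge count to $5|V(G)|-14$, forcing a $K_7$ minor. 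As $K_7$ is IK and containing a $K_7$ minor forces IK, the enlarged graph is IK, so no proper supergraph of $G$ of the same order is nIK.

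I expect no substantive obstacle, as the argument is a direct analogue of the $W_n+P_2$ proof; the only point requiring care is verifying that the edge total is exactly one short of Mader's bound, which is precisely what makes the single-edge maximality argument go through.
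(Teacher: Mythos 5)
Your proposal is correct and follows essentially the same argument as the paper: delete two vertices of the $K_3$ to exhibit $G$ as 2-apex (hence nIK by Sachs), then count $5|V(G)|-15$ edges so that any added edge meets Mader's $5|V(G)|-14$ threshold and forces a $K_7$ minor. Your write-up is in fact slightly more careful than the paper's, since you verify that $|V(G)|\ge 7$ so Mader's theorem applies.
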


\begin{proof} The graph $G$ is 2-apex, and it is therefore nIK, by \cite{Sa}. As any maximal outerplanar graph of order $n$ has exactly $2n-3$ edges, the graph $G$ has $2(|V(G)|-3)-3+3(|V(G)|-3)+3=5|V(G)|-15$ edges and is  maxnIK \cite{Ma}.
\end{proof}

For the elongated prism case, we distinguish two cases, according to the number of non-triangular edges of the triangular prism which are subdivided.

\begin{theorem} 
 Let H denote an elongated prism of order $n\ge 8$ obtained by repeated subdivisions of at most two  of three non-triangular edges of the prism graph. Then $G\simeq H+P_2$ is a maxnIK graph.
\end{theorem}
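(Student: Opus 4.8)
The plan is to establish the two halves of the maxnIK property separately: that $G$ is nIK, and that $G+e$ is IK for every non-edge $e$. The first difficulty is that, unlike in the previous theorems, $H$ is not outerplanar, so $G$ is not $2$-apex and Sachs' criterion is unavailable. Indeed, a short count of edges shows that $G-\{x,y\}$ can satisfy the Euler bound $|E|\le 3|V|-6$ only when $\{x,y\}\subset\{u,v,w\}$, and in that case $G-\{x,y\}$ is a cone over the non-outerplanar graph $H$, hence non-planar. So the knotless embedding must be produced explicitly. I fix notation so that the prism has triangular faces $a_1a_2a_3$ and $b_1b_2b_3$ and rungs $a_ib_i$, and, invoking the hypothesis that at most two rungs are subdivided, I take $a_3b_3$ to be an unsubdivided edge.

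For the nIK half, the key observation is that $w$ is adjacent to everything, so $G$ is obtained from $J:=H+E_2$ by coning $w$ over it; that is, $G\simeq (H+E_2)+K_1$ with apex $w$. By the J{\o}rgensen--Dehkordi--Farr result recalled in Section~\ref{max}, $J$ is maxnIL and hence linklessly embeddable, so it has a flat embedding. Starting from such an embedding, I would lay $J$ out with all of its vertices on a round sphere $S$ and its edges routed off the finitely many radii joining the center $w$ to those vertices; coning $w$ along these radii then yields a spatial embedding of $G$ in which every cycle through $w$ bounds a cone disk and is therefore unknotted, while every cycle missing $w$ is unknotted because $J$ is embedded flatly. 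The hard part is producing the flat embedding of $J$ in this ``coneable'' position, and this is exactly where the hypothesis enters: the unsubdivided rung $a_3b_3$ keeps $J$ thin enough to be displayed with all vertices visible from a single point, a feature that is lost once all three rungs are subdivided. Verifying this layout --- equivalently, checking by hand that the coned embedding has no knotted cycle --- is the main obstacle, and I expect it to require an explicit construction rather than a soft argument.

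For maximality I would split the non-edges of $G$ into two families. Adding $uv$ turns $\{u,v,w\}$ into a triangle, so $G+uv\simeq H+K_3$; contracting the subdivisions realizes the prism, which has a $K_4$ minor, and since $K_3$ is joined to all of $H$ this produces $K_4+K_3=K_7$, whence $G+uv$ is IK. For an added edge $e$ inside $H$, I would exhibit $K_7$ or $K_{3,3,1,1}$ as a minor of $(H+e)+P_2$. The mechanism to exploit is that in $G$ itself the obvious attempt at a $K_7$ minor --- the sets $\{u\}$, $\{w\}$, a branch set containing $v$, and a $K_4$ minor of $H$ --- fails for exactly one reason: because deleting any single rung renders $H$ outerplanar, every $K_4$ minor of $H$ must use all three rungs and hence consumes all six corners and (as at most two rungs are subdivided) every subdivision vertex, so the branch set carrying $v$ cannot be extended to a vertex adjacent to $u$, while $u\not\sim v$. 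Adding an internal edge either enriches the minor available in $H+e$ or frees a vertex, restoring the missing adjacency and completing $K_7$ (or producing a $K_{3,3}$ minor of $(H+e)+v$ that, with the adjacent universal pair $w,u$, gives $K_{3,3,1,1}$). This last step is a finite case analysis over the position of $e$ relative to the two subdivided rungs and the intact rung $a_3b_3$, and together with the coneability step it is where essentially all the work of the proof lies.
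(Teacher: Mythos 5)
Your proposal does not complete either half of the theorem, and the nIK half is missing the paper's key idea. You are right that $G$ is not 2-apex, so Sachs cannot be invoked directly; but your remedy --- take a flat embedding of $J=H+E_2$, place its vertices on a round sphere, and cone $w$ from the center --- cannot work as described. For a cycle through $w$ to bound its cone disk, the path of $J$ joining the two radial edges must project injectively to the sphere under radial projection; since $J$ contains the nonplanar graph $H+K_1$, the radial projection of $J$ cannot be injective, so some cone disks are necessarily singular, and unknottedness of those cycles would have to be argued by entirely different means. You concede this step is ``the main obstacle'' and leave it open; nor can you fall back on a general theorem that the cone over a linklessly embeddable graph is knotlessly embeddable --- no such result is available to cite (if it were, it would trivialize this theorem and others in the paper). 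The paper's argument is soft and short: since at most two rungs are subdivided, some rung $v_3v_4$ is an actual edge, so $\{v_3,v_4,u\}$ spans a triangle of $G$; performing a $\nabla Y$-move on this triangle yields a graph $G'$ which \emph{is} 2-apex (delete the new vertex $t$ and $w$; here one uses that the prism minus a rung is outerplanar), hence $G'$ is nIK by \cite{Sa}, and since the $\nabla Y$-move preserves intrinsic knottedness \cite{MRS}, $G$ itself must be nIK. This, not your ``visibility'' heuristic, is where the hypothesis on the number of subdivided rungs enters.

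The maximality half is also incomplete. Your treatment of the non-edge $uv$ is fine ($G+uv\simeq H+K_3$ has a $K_4+K_3=K_7$ minor), but for non-edges inside $H$ you only outline a case analysis that you never carry out. The paper needs no cases: $G$ is the cone over $J=H+E_2$, and $J$ is \emph{maximal} linklessly embeddable by Dehkordi--Farr \cite{DF} --- the very result you quote, of which you use only the weaker fact that $J$ is nIL. Consequently, for every non-edge $e$ of $G$ (including $uv$), the graph $J+e$ is IL and therefore has a Petersen-family minor; coning over that minor exhibits in $G+e$ a minor from the $K_7$- or $K_{3,3,1,1}$-family, which is IK. Maximality of $G$ thus follows directly from maximality of $J$, with no analysis of where $e$ sits relative to the subdivided rungs.
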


\begin{proof} 

Assume that $H$ is isomorphic to the graph depicted in Figure \ref{2edgeprism}(a), such that the edge $\{v_3,v_4\}$ is not subdivided. Perform a $\nabla Y-$move on the triangle induced by the vertices $\{v_3,v_4,u\}$, by deleting the edges $\{v_3,v_4\}$, $\{v_3,u\}$, and $\{v_4,u\}$, and adding a new vertex $t$ incident to all of  $\{v_3,v_4,u\}$, to obtain a new graph $G'$. This graph is 2-apex, since deleting the vertices $t$ and $w$ gives the planar graph of Figure \ref{2edgeprism}(b). Thus, $G'$ is nIK, and so must be $G$, as the $\nabla Y-$ move preserves the IK property \cite{MRS}.

\begin{figure}[htpb!]
\begin{center}
\begin{picture}(450, 130)
\put(-10,0){\includegraphics[width=6in]{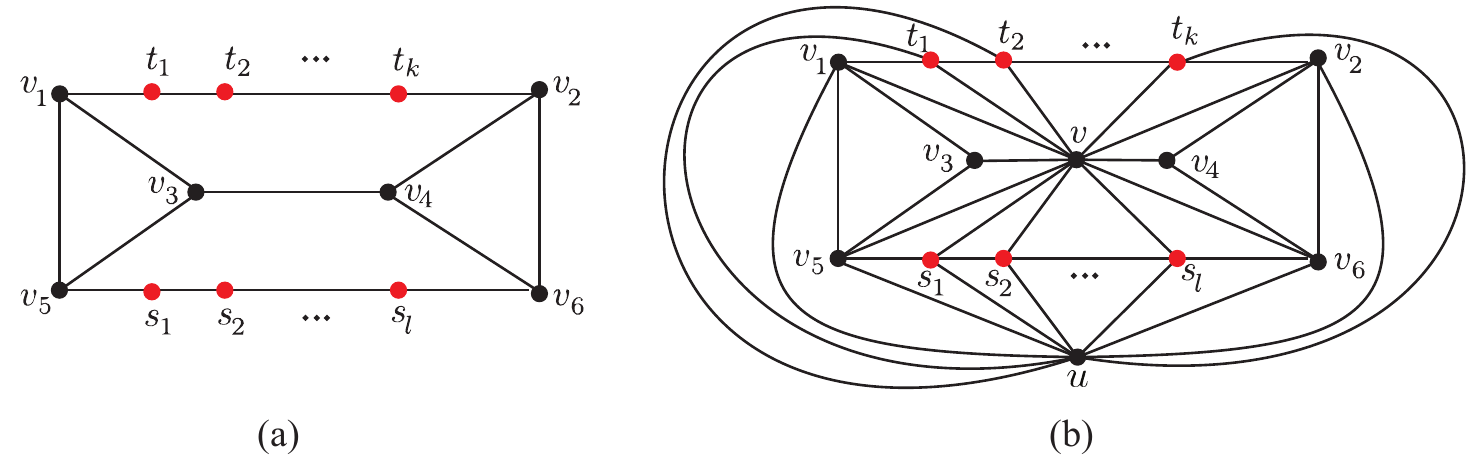}}
\end{picture}
\caption{\small (a) elongated prism with only two edges subdivided; (b) planar graph obtained by deleting the vertices $t$ and $w$ of $H+P_2$ }\label{2edgeprism}
\end{center}
\end{figure}

To show that $G$ is maximal nIK, one  notices  that $G$ is isomorphic to a cone $w$ over $H+E_2$. Since $H+E_2$ is maxnIL by \cite{DF}, adding any edge to $G$ produces a structure of a cone over an IL graph. This structure will contain a minor isomorphic to a graph in either the $K_7$-family or the $K_{3,3,1,1}$-family, and will therefore be IK.

 \end{proof}

\begin{theorem} 
 Let H denote an elongated prism of order $n\ge 9$ obtained by repeated subdivisions of all three non-triangular edges of the prism graph. Then $G\simeq H+P_2$ is an IK graph.
\end{theorem}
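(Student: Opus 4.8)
The plan is to produce an intrinsically knotted minor of $G$, after first normalizing the size of $H$. Write the vertices of $P_2$ as $u,v,w$ with $uw,vw\in E(P_2)$ and $uv\notin E(P_2)$, so that in $G$ the vertex $w$ is adjacent to everything, each of $u,v$ is adjacent to every vertex of $H$, and $u\not\sim v$. Contracting an edge of a subdivided rung commutes with the join, $(H+P_2)/e=(H/e)+P_2$, so by repeatedly contracting rung-subdivision edges, stopping once each of the three rungs carries a single subdivision vertex, one sees that the minimal model, in which $H$ consists of the triangles $a_1a_2a_3$ and $b_1b_2b_3$ together with vertices $s_1,s_2,s_3$ and edges $a_is_i,s_ib_i$, is a minor of every graph in the statement; hence it suffices to treat this $9$-vertex $H$. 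It is also worth recording the relation $H+K_3=G+uv$: since $H+E_2$ is maximal linklessly embeddable \cite{DF}, the graph $H+K_2$ is intrinsically linked, and $H+K_3$ is the cone over $H+K_2$, hence IK (it contains a $K_7$-family or $K_{3,3,1,1}$-family minor, exactly as in the maximality argument of the preceding theorem). Thus the theorem asserts that deleting the single edge $uv$ from the IK graph $H+K_3$ leaves it intrinsically knotted.

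The mechanism that should make this true, and that distinguishes the present case from the one where some rung is left undivided, is that subdividing all three rungs creates the three vertices $s_1,s_2,s_3$, each adjacent to both $u$ and $v$ (and to $w$). These three bridges supply the connections between the neighborhoods of $u$ and of $v$ that the missing edge $uv$ would have provided, and they are exactly the low-valence vertices one needs to realize a $\nabla Y$-descendant of a complete-type IK graph. Concretely, I would search for a graph $\Gamma$ obtained from $K_{3,3,1,1}$ (or from the cone $H+K_3$) by $\nabla Y$-moves, embedded in $G$ as a minor with $w$ and $u$ in the roles of the two high-valence (cone) vertices, with $v$ held in reserve as an auxiliary used only to route branch sets, and with $s_1,s_2,s_3$ playing the degree-$3$ vertices created by the $\nabla Y$-moves. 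Because $\nabla Y$-moves preserve intrinsic knottedness \cite{MRS}, exhibiting such a $\Gamma$ as a minor of $G$ finishes the proof.

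The hard part is carrying out this last step honestly, and it is where all the content lies. Since $H$ is planar it has no $K_5$ and no $K_{3,3}$ minor, and, crucially, the minimal $H$ with any one vertex deleted has neither a $K_4$ nor a $K_{2,3}$ minor; consequently the ``textbook'' realizations of $K_7$ or $K_{3,3,1,1}$ as minors of $G$ (two or three mutually adjacent apexes sitting over a dense planar core) are simply unavailable, and indeed their unavailability is precisely why the graph with an undivided rung is only $2$-apex and hence nIK. One must instead verify that a genuinely sparser family member $\Gamma$ embeds, which reduces to a finite check: choosing disjoint connected branch sets built from the two triangles and the three bridges $s_i$, confirming every required adjacency (routing the ``cross'' adjacencies through the $s_i$ and the second apex $v$ rather than through a nonexistent $uv$ edge), and matching the result against a known graph in the $K_7$- or $K_{3,3,1,1}$-family. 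I expect this identification, separating the all-three-subdivided graph from its nIK relatives, to be the entire substance of the argument, with the size reduction and the cone/edge-deletion framing above being routine; should a clean minor prove elusive, the fallback is a direct Conway--Gordon-type parity computation over the cycles of the $9$-vertex model.
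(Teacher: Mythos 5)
Your opening reduction is exactly the paper's first step: contracting subdivision edges until each rung carries a single subdivision vertex, so that it suffices to treat the $9$-vertex prism model $P'$ joined with $P_2$ (the paper calls this minor $S\simeq P'+P_2$). The framing via $H+K_3 = G+uv$ is also sound. But after that point your proposal stops being a proof: the entire content of the theorem is the claim that this $12$-vertex graph $S$ is IK, and you never establish it. You describe a search for a $\nabla Y$-descendant $\Gamma$ of $K_7$ or $K_{3,3,1,1}$ realized as a minor of $S$, but you do not name a candidate $\Gamma$, exhibit branch sets, or verify adjacencies; you explicitly defer this (``I would search for\dots'', ``I expect this identification\dots to be the entire substance of the argument''). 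As you yourself observe, that step is where all the content lies, so what you have is a plan, not a proof.

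The gap is not merely one of effort; the proposed route may be a dead end. The paper does \emph{not} produce a $K_7$- or $K_{3,3,1,1}$-family minor of $S$. Instead it invokes Foisy's theorem that a graph having a doubly linked $D_4$ minor in every embedding (i.e., with $lk(C_1,C_3)\neq 0$ and $lk(C_2,C_4)\neq 0$) is IK, and verifies that $S$ has this property using Naimi's Mathematica program, which checks all embeddings combinatorially. Since minor-minimal IK graphs are not classified, a graph can be IK without containing any descendant of $K_7$ or $K_{3,3,1,1}$ as a minor, so your strategy is not guaranteed to succeed even though the theorem is true; the authors' recourse to Foisy's criterion plus machine verification suggests the standard minors are at best hard to find here. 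Your fallback, a ``direct Conway--Gordon-type parity computation,'' is likewise not a routine procedure: no general parity invariant certifying knotting is available for an arbitrary $12$-vertex graph, and crafting one for $S$ would itself be a substantial argument. To close the gap you would need either to actually exhibit a known IK minor with explicit branch sets, or to reproduce the paper's $D_4$-based verification.
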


\begin{proof} By repeated edge contractions applied to $G$, one obtains the minor $S\simeq P'+P_2$, where $P'$ is the graph depicted in Figure \ref{3prism}(a).

\begin{figure}[htpb!]
\begin{center}
\begin{picture}(400, 80)
\put(30,0){\includegraphics[width=5in]{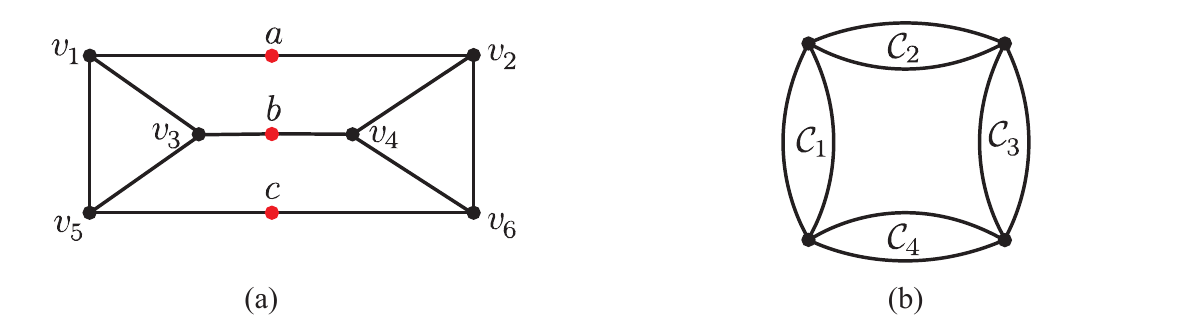}}
\end{picture}
\caption{\small (a) the graph $P'$ obtained by subdividing once each non-triangular edge of the prism graph; (b) the graph $D_4$}
\label{3prism}
\end{center}
\end{figure}

 In \cite{Foisy}, Foisy proved that if a graph contains a doubly linked $D_4$-minor in every embedding, the graph must be IK. 
 The graph $D_4$ is depicted in Figure \ref{3prism}(b).  
 An embedding of the graph $D_4$ is doubly linked if the linking numbers $lk(C_1,C_3)$ and $lk(C_2,C_4)$ are both nonzero.
We used a Mathematica program written by Naimi \cite{Na} to show that $S$ has a doubly linked $D_4$-minor in every embedding.

 \end{proof}


\section{The $\mu$ invariant}
\label{mu}

In this section we determine the value of the $\mu$ invariant for complements of maximal non-separating planar graphs. 
By \cite{HLS}, if $G$ is planar with $n$ vertices, then $\mu(cG)\ge n-5$. 
We first show the inequality $\mu(cG)\le n-4$ for graphs $G$ which are maximal non-separating planar.
In Theorem 2, we show this is in fact an equality.

In \cite{KLV}, Kotlov, Lov\`asz, and Vempala conjectured that, for a simple graph $G$, 
$\mu(G)+\mu(cG)\ge n-2$. 
We review that the conjecture holds for planar graphs and 1-apex graphs.
We show that as a consequence of Theorem 2, the conjecture holds for 2-apex graphs $G$ for which $G-\{u,v\}$ is planar non-separating.\\

\noindent \textbf{Theorem 1.}
\textit{If $G$ is a maximal non-separating planar graph with $n\ge 7$ vertices, then $cG$ is $(n-7)-$apex.}
\begin{proof}

{\textit{Outerplanar case}}. Any maximal outerplanar graph $H$ of order $n\ge3$ can be represented by a triangulated $n$ cycle in the plane (with the unbounded face containing all vertices).
The $n-$cycle contains at least one 2-chord, an edge which  forms a triangle with two adjacent edges along the cycle. 
We say that the 2-chord isolates the vertex which is part of the triangle but is not incident to the 2-chord.  
For example, in Figure \ref{fig-wheeln}(a), the 2-chord $v_1v_6$ isolates the vertex $v_7$ and the 2-chord $v_1v_5$ of $H-v_7$ isolates $v_6$.
The complement of the unique maximal outerplanar graph with 5 vertices is $P_3$, a path with three edges, together with an isolated vertex.
It follows that the complement of any maximal outerplanar graph with 7 vertices is planar, since the deletion of two vertices gives a path with three edges and an isolated vertex. 
For example, after the deleting the vertices $v_7$ and $v_6$, the complement of the graph in Figure \ref{fig-wheeln}(a) is the path $v_1v_3v_5v_2$ together with the isolated vertex $v_4$.
Starting with a maximal outerplanar graph with $n\ge 7$ vertices, one can recursively delete $n-7$ isolated  vertices and obtain a maximal outerplanar graph of order 7.
The same sequence of $n-7$ vertex deletions gives a planar subgraph of $cG$.  Thus $cG$ is $(n-7)$-apex.

\noindent {\textit{Wheel case}}. Let $G$ be the wheel on $n$ vertices. 
Then $cG \simeq (K_{n-1}\setminus C_{n-1}) \cup K_1$. 
Let $\{v_1, v_2, \ldots, v_{n-1}\}$ be the vertices of $C_{n-1}$ in consecutive order, as in Figure \ref{fig-wheeln}(b).
Then $cG\setminus \{v_7, v_8, \ldots, v_{n-1}\}$ is a planar graph (the triangular prism added one edge, together with an isolated vertex) and thus  $cG$ is $(n-7)-$apex.
See Figure \ref{fig-wheeln}(c).

\begin{figure}[htpb!]
\begin{center}
\begin{picture}(450, 115)
\put(0,0){\includegraphics[width=6in]{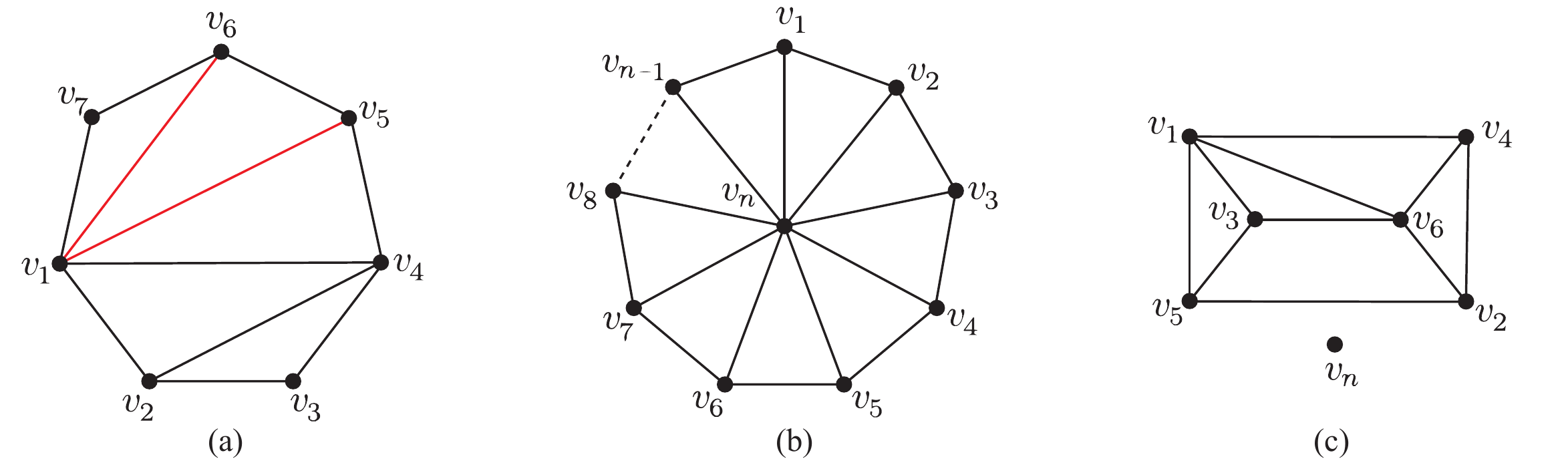}}
\end{picture}
\caption{\small (a) maximal outerplanar graph with 7 vertices; (b) graph $G$, a wheel with  $n$  vertices; (c) $cG\setminus \{v_7, v_8, \ldots, v_{n-1}\}$}
\label{fig-wheeln}
\end{center}
\end{figure}

\noindent {\textit{Elongated prism case}}. 
Let $G$ be an elongated prism with $n\ge 7$ vertices. 
Without loss of generality let $v_1v_3v_5$ be one of two induced triangles of $G$.
Let $a,b$ and $c$ denote their respective neighbors in $V(G)\setminus\{v_1, v_3, v_5\}$ as in Figure \ref{fig-eprism}(a).
Deleting all vertices but $\{v_1, v_3, v_5, a, b, c \}$  in $cG$ gives a subgraph of the outerplanar graph with six vertices in Figure \ref{fig-eprism}(b).
Deleting any $n-7$ vertices of $cG$ none of which is in the set $\{v_1, v_3, v_5, a, b, c\}$ yields a planar graph, thus $cG$ is $(n-7)$-apex.

\begin{figure}[htpb!]
\begin{center}
\begin{picture}(330, 90)
\put(0,0){\includegraphics[width=4.5in]{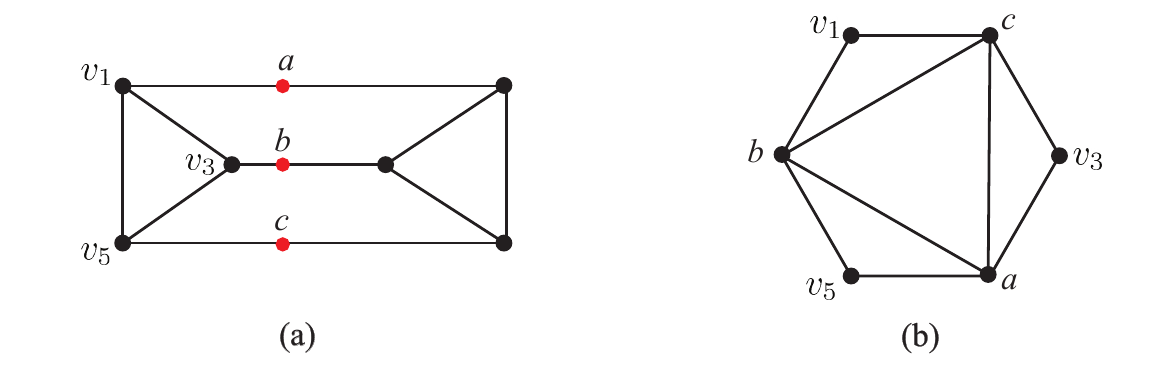}}
\end{picture}
\caption{\small (a) elongated prism (b) subgraph induced by $\{v_1, v_3, v_5, a, b, c\}$ in $cG$ }
\label{fig-eprism}
\end{center}
\end{figure}
\end{proof}

\begin{corollary}
For $G$ a maximal non-separating planar graph with $n\ge 7$ vertices,  $\mu(cG)\le n-4$.
\label{corn-7}
\end{corollary}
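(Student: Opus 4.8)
The plan is to deduce this directly from Theorem 1 using two standard facts about the Colin de Verdi\`ere invariant. By Theorem 1 there is a set $S$ of $n-7$ vertices of $cG$ whose removal leaves a planar graph $H = cG \setminus S$. By Colin de Verdi\`ere's planarity characterization (the inequality $\mu \le 3$ for planar graphs), we immediately get $\mu(H) \le 3$.

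Next I would rebuild $cG$ from $H$ by adjoining the vertices of $S$ back one at a time, producing a chain $H = H_0 \subset H_1 \subset \cdots \subset H_{n-7} = cG$, where each $H_{i+1}$ is obtained from $H_i$ by adding a single vertex of $S$ together with all of its incident edges in $cG$. The essential input is the bound from \cite{HLS} that adjoining one vertex raises $\mu$ by at most one, i.e. $\mu(H_{i+1}) \le \mu(H_i) + 1$. Iterating this inequality over the $n-7$ additions yields
\[
\mu(cG) \;\le\; \mu(H) + (n-7) \;\le\; 3 + (n-7) \;=\; n-4,
\]
which is the claimed bound.

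There is essentially no obstacle here beyond correctly invoking the cited results: the only point requiring care is that the per-vertex increment bound of \cite{HLS} holds regardless of how many edges join the newly added vertex to the rest of the graph, so that the telescoping estimate above is valid at every step. Granting Theorem 1 and this monotonicity property, the corollary follows with no case analysis, in contrast to the type-by-type argument needed for Theorem 1 itself.
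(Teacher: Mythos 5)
Your proposal is correct and matches the paper's own proof: both apply Theorem \ref{n7ap} to obtain a planar subgraph $H$ of $cG$ after deleting $n-7$ vertices, use $\mu(H)\le 3$ for planar graphs, and then invoke Theorem 2.7 of \cite{HLS} (adding a vertex increases $\mu$ by at most one) iteratively to conclude $\mu(cG)\le 3+(n-7)=n-4$. The only difference is that you spell out the telescoping chain of intermediate graphs explicitly, which the paper leaves implicit.
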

\begin{proof}
By Theorem \ref{n7ap}, $cG$ is $(n-7)$-apex. 
Let $H$ be the planar subgraph of $cG$ obtained by deleting $n-7$ vertices.
Then $\mu(H)\le 3$ and $\mu(cG)\le 3 + (n-7) = n-4$, since adding one vertex to a graph increases the value of $\mu$ by at most one (see Theorem 2.7 in \cite{HLS}).
\end{proof}

\noindent Corollary \ref{corn-7} establishes an upper bound of $n-4$ for the values of $\mu$ of complements of maximal non-separating planar graphs on $n$ vertices. 
We show that  $n-4$ is the actual value of $\mu$. 
We use Theorem 5.5 in \cite{HLS}, which says that for $H$ a graph on $n$ vertices and $\nu(H):= n-\mu(cH)-1$, the inequality $\nu(H)\le 2$ holds if and only if $H$ does not
contain as a subgraph any of the five graphs in Figure \ref{fig-nu}. We also use that  for a graph $G$ with at least one edge $\mu(G + K_1) =\mu(G)+1$, by Theorem 2.7 in \cite{HLS}.

\begin{figure}[htpb!]
\begin{center}
\begin{picture}(400, 100)
\put(-20,0){\includegraphics[width=6in]{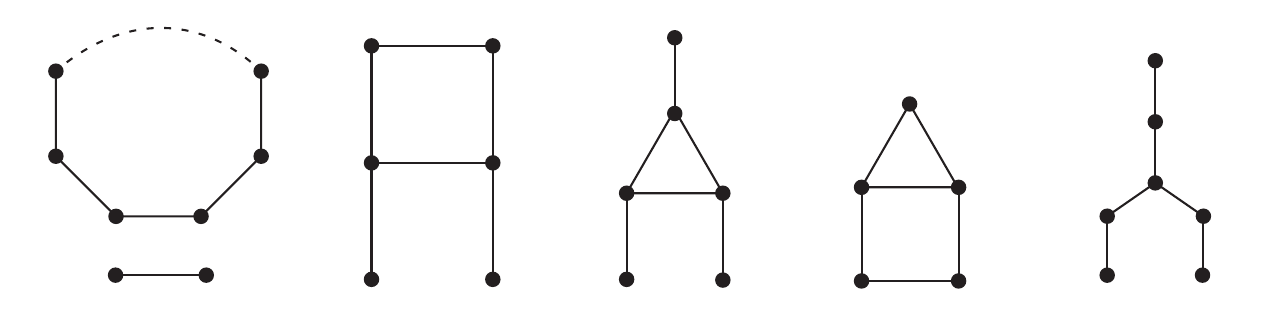}}
\end{picture}
\caption{\small Five graphs}
\label{fig-nu}
\end{center}
\end{figure}

\noindent \textbf{Theorem 2.}
\textit{For $G$ a maximal non-separating planar graph with $n\ge 7$ vertices, $\mu(cG)=n-4$.}
\begin{proof}
Corollary \ref{corn-7} established the inequality $\mu(cG)\le n-4$. 
Here we show that $\mu(cG)\ge n-4$. 
If  $G$ is outerplanar, then $\mu(cG)\ge n-4$  \cite{KLV}.
If $G$ is the wheel graph on $n$ vertices, $cG= cC_{n-1}\cup K_1$.
By Theorem 5.5 in  \cite{HLS},  $\nu(C_{n-1})\le 2$ and we have
$$\mu(cG)=\mu(cC_{n-1})=n-1-\nu(C_{n-1})-1 \ge n-4.$$
For elongated prisms, we distinguish two cases, according to the number of non-triangular edges of the prism which are being subdivided.\\\\
\textbf{Case 1}. Consider $G$ the elongated prism in Figure \ref{fig-eprism1}(a), with exactly one non-triangular edge of the prism graph subdivided, $v_1v_2$,
If at least two vertices are added along $v_1v_2$,  as in Figure \ref{fig-eprism1}(a), consider the graph $H=G- \{v_1,v_2\}$. 
Then $\mu(cH) = (n-2) -\nu(H)-1 \ge n-5$, by Theorem 5.5 in \cite{HLS}.
Since in $cG$, the set of adjacent vertices $\{v_1,v_2\}$ cones over $cH$, $\mu(cG)\ge n-4$, by Theorem 2.7 in \cite{HLS}.
If only one vertex is added along the one edge, as in Figure \ref{fig-eprism1}(b), then $cG$ contains a $K_4$ minor and thus $\mu(cG)\ge 3$. 
See Figure \ref{fig-eprism1}(c).

\begin{figure}[htpb!]
\begin{center}
\begin{picture}(430, 100)
\put(-20,0){\includegraphics[width=6.5in]{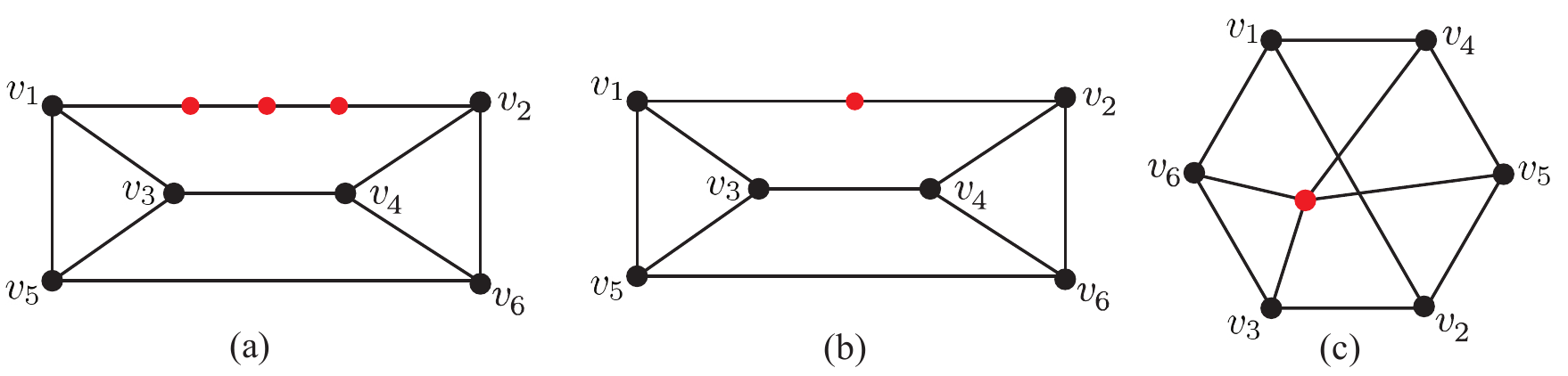}}
\end{picture}
\caption{\small (a)  elongated prism with one non-triangular edge subdivided by more than one vertex;  (b)  elongated prism with one non-triangular edge subdivided by exactly one vertex; (c) complement of graph in (b) }
\label{fig-eprism1}
\end{center}
\end{figure}

\noindent\textbf{Case 2}. 
Assume $G$ is obtained from the triangular prism by subdividing edges $v_1v_2$ and $v_5v_6$ along the way, as in Figure  \ref{fig-eprism2}(a).
The graph $H =G-\{v_1, v_6\}$ is a path with  $n-2$ vertices, so $\mu(cH)\ge n-5$ \cite{KLV}.
In $cG$, the set of adjacent  vertices $\{v_1, v_6\}$ cones over $cH$, yielding $\mu(cG)\ge \mu(cH)+1 \ge n-4$, by Theorem 2.7 in \cite{HLS}.
\end{proof}

\begin{figure}[htpb!]
\begin{center}
\begin{picture}(430, 80)
\put(-20,0){\includegraphics[width=6.5in]{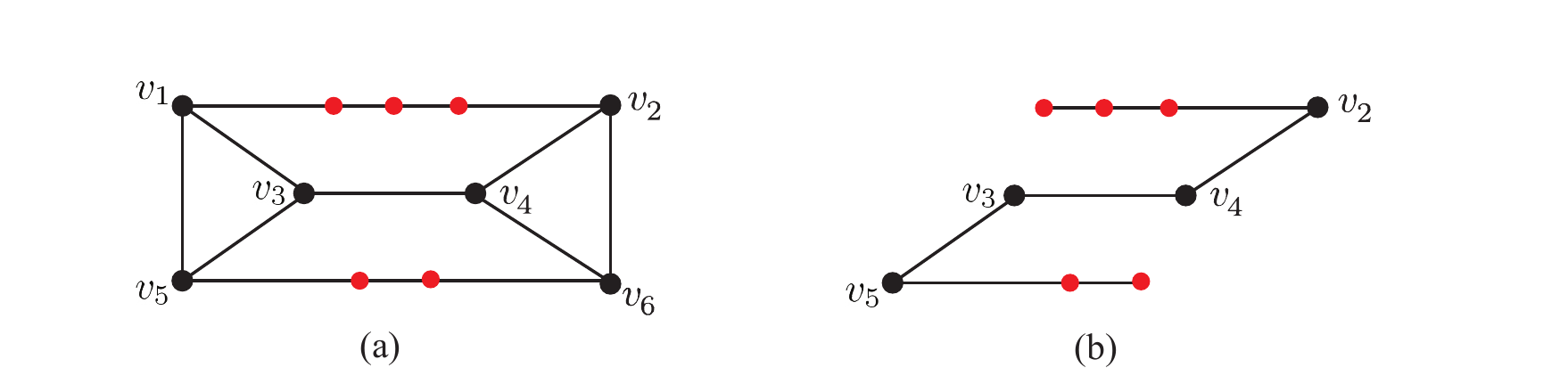}}
\end{picture}
\caption{\small (a) elongated prism  $G$ with two subdivided edges  (b) $H =G-\{v_1, v_6\}$ }
\label{fig-eprism2}
\end{center}
\end{figure}

\noindent We briefly discuss the state of a conjecture of Kotlov, Lov\`asz, and Vempala  \cite{KLV} that for a simple graph $G$ on $n$ vertices, 
$\mu(G)+\mu(cG)\ge n-2$. By work in \cite{KLV}, \cite{dV}, and \cite{HLS}, the conjecture holds if either one of $G$ or $cG$ is planar.
We note that the conjecture holds if $\mu(G)\ge n-6$ or $\mu(cG)\ge n-6$. Assume $\mu(G)\ge n-6$: if $\mu(cG)\ge 4$, then $\mu(G) +\mu(cG)\ge n-2$; if $\mu(cG)<4$, $\mu(G)$ is planar, and the conjecture holds. 
\begin{proposition} The conjecture holds for 1-apex graphs.
\end{proposition}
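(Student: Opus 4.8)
The plan is to establish the conjecture $\mu(G)+\mu(cG)\ge n-2$ for $1$-apex graphs by reducing to cases already handled in the excerpt. Suppose $G$ is $1$-apex, so there is a vertex $v$ such that $G-v$ is planar. The key observation is that the conjecture has already been verified whenever $\mu(G)\ge n-6$ or $\mu(cG)\ge n-6$ (the remark immediately preceding the proposition). So the strategy is to show that a $1$-apex graph always falls into one of these regimes, and then to deal with any residual small cases directly.

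First I would bound $\mu(G)$ from above using apex structure. Since $G-v$ is planar, $\mu(G-v)\le 3$, and since adding a single vertex raises $\mu$ by at most one (Theorem 2.7 in \cite{HLS}), we get $\mu(G)\le 4$. This means that for $n$ large the hypothesis $\mu(G)\ge n-6$ cannot be leveraged directly from $G$, so instead I would work with the complement. The natural route is to apply the planar lower bound to $cG$: I want to show $\mu(cG)$ is large. Here I would use that deleting $v$ from $G$ and passing to complements interacts cleanly with the cone construction. Concretely, in $cG$ the vertex $v$ is adjacent to exactly those vertices not adjacent to it in $G$; and $c(G-v)=cG-v$ is the complement of a planar graph, so by the inequality $\mu(c(G-v))\ge (n-1)-5=n-6$ from part (a) of the Kotlov--Lov\'asz--Vempala result \cite{KLV}, we obtain $\mu(cG-v)\ge n-6$.

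The finishing step is then to transfer this bound to $cG$ itself. Since $cG$ contains $cG-v$ as a subgraph and $\mu$ is monotone under taking subgraphs (equivalently minors, by \cite{dV}), we get $\mu(cG)\ge \mu(cG-v)\ge n-6$. Thus $cG$ lands in the regime $\mu(cG)\ge n-6$ already settled in the preceding remark, and the conjecture follows: if $\mu(G)\ge 4$ then $\mu(G)+\mu(cG)\ge 4+(n-6)=n-2$; and if $\mu(G)<4$, then $G$ is planar and the conjecture holds by the planar case recalled earlier.

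The main obstacle I anticipate is ensuring that $\mu$ is genuinely monotone under vertex deletion for this argument, and that the planar lower bound is applied to the correct graph: one must be careful that $c(G-v)$, the complement taken \emph{within} $K_{n-1}$, coincides with the induced subgraph $cG-v$, which it does since complementation commutes with deleting a vertex and all its incident edges. A secondary subtlety is the boundary between the two regimes $\mu(G)<4$ and $\mu(G)\ge 4$; since $\mu(G)\le 4$ for $1$-apex graphs, the split is clean and no intermediate values escape. No genuinely hard estimate is required — the proposition is essentially a bookkeeping combination of the apex bound on $\mu(G)$, the planar complement bound on $\mu(cG-v)$, and monotonicity.
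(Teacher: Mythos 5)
Your proposal is correct and follows essentially the same route as the paper: delete the apex vertex $v$, apply the Kotlov--Lov\'asz--Vempala planar bound to get $\mu(c(G-v))\ge n-6$, use $c(G-v)=cG-v\subseteq cG$ and monotonicity of $\mu$ to conclude $\mu(cG)\ge n-6$, and finish via the dichotomy ($\mu(G)\ge 4$ gives the sum directly; $\mu(G)\le 3$ makes $G$ planar, a settled case). The only differences are cosmetic: you spell out the concluding case split that the paper leaves to its preceding remark, and your preliminary bound $\mu(G)\le 4$ is not actually needed.
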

\begin{proof}
Let $G$ be an 1-apex graphs with $n$ vertices and $H=G-\{v\}$ planar. 
Then $\mu(cH)\ge (n-1)-5 = n-6$ \cite{KLV}. 
We have that $cH$, the complement of $H$ in $K_{n-1}$, is a subgraph of $cG$, the complement of $G$ in $K_n$, since $cG$ may have additional edges incident to $v$, and so $n-6 \le \mu (cH) \le  \mu(cG)$. Thus the conjecture holds for $G$.
\end{proof} 

\begin{corollary}
Let $G$ be a 2-apex graphs with $n$ vertices with $H=G-\{u,v\}$ planar non-separating. Then $\mu(G)+\mu(cG)\ge n-2$.
\end{corollary}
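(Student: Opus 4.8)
The plan is to reduce the corollary to the single inequality $\mu(cG)\ge n-6$. Once this is in hand, the conclusion follows from the observation recorded above, namely that the conjecture holds whenever $\mu(cG)\ge n-6$: at that point either $\mu(G)\ge 4$, so that $\mu(G)+\mu(cG)\ge 4+(n-6)=n-2$, or $\mu(G)\le 3$, so that $G$ is planar and the conjecture is already known in the planar case. Thus essentially all the work lies in proving $\mu(cG)\ge n-6$.

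First I would locate a copy of $cH$ inside $cG$. Because $H=G-\{u,v\}$ is the subgraph of $G$ induced on $V(G)\setminus\{u,v\}$, the edges of $G$ among these $n-2$ vertices are exactly the edges of $H$; consequently the edges of $cG$ among the same vertices are exactly the non-edges of $H$. In other words, the subgraph of $cG$ induced on $V(H)$ is precisely $cH$, the complement of $H$ taken in $K_{n-2}$. Since $cH$ is a subgraph, hence a minor, of $cG$, minor-monotonicity of $\mu$ gives $\mu(cG)\ge\mu(cH)$.

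Next I would bound $\mu(cH)$ from below using Theorem~\ref{thmn-4}. By adding edges one at a time while preserving non-separating planarity, I can complete $H$ to a maximal non-separating planar graph $\hat H$ on the same vertex set $V(H)$; by the classification of Dehkordi and Farr \cite{DF}, $\hat H$ is then a maximal outerplanar graph, a wheel, or an elongated prism. Assuming $n-2\ge 7$, Theorem~\ref{thmn-4} applies to $\hat H$ and gives $\mu(c\hat H)=(n-2)-4=n-6$. Crucially, passing to complements reverses inclusion: since $\hat H\supseteq H$ on the common vertex set, we have $c\hat H\subseteq cH$, so monotonicity yields $\mu(cH)\ge\mu(c\hat H)=n-6$. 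Chaining this with the previous paragraph gives $\mu(cG)\ge\mu(cH)\ge n-6$, exactly what was needed.

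The delicate points here are purely bookkeeping: one must keep straight that enlarging $H$ shrinks its complement, and one needs $n-2\ge 7$ (that is, $n\ge 9$) in order to invoke Theorem~\ref{thmn-4}. The genuine obstacle is therefore not conceptual but the disposal of the finitely many small cases $n\le 8$; these I would handle directly, using that when $\hat H$ is outerplanar or a path the estimate $\mu(c\hat H)\ge (n-2)-4$ already follows from the Kotlov--Lov\'asz--Vempala bounds \cite{KLV} without any restriction on order, and checking the remaining small wheels and the triangular prism by the same forbidden-subgraph computation ($\nu\le 2$) used in the proof of Theorem~\ref{thmn-4}. Apart from this, the corollary is a short and clean consequence of Theorem~\ref{thmn-4}.
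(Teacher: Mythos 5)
Your proposal is correct and follows essentially the same route as the paper: reduce to showing $\mu(cG)\ge n-6$, note that $cH$ (the complement of $H$ in $K_{n-2}$) sits inside $cG$, and invoke Theorem~\ref{thmn-4} together with minor-monotonicity of $\mu$. In fact you are more careful than the paper on two points it glosses over --- the explicit completion of $H$ to a \emph{maximal} non-separating planar graph (with the inclusion-reversal $c\hat H\subseteq cH$) and the small cases $n\le 8$ where Theorem~\ref{thmn-4} does not directly apply.
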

\begin{proof}
Since $H$ is planar, non-separating, by Theorem \ref{thmn-4}, $\mu(cH)\ge (n-2)-4 =n-6$, with equality if $H$ is maximal. 
We have that $cH$, the complement of $H$ in $K_{n-2}$, is a subgraph of $cG$, the complement of $G$ in $K_n$, since $cG$ may have additional edges incident to $u$ and $v$,  and so $\mu(cG)\ge \mu(cH)\ge n-6$. Thus the conjecture holds for $G$.

\end{proof}

\section{Graphs of order ten}
\label{ten}

The relationship between the $\mu$ invariant and the property of being intrinsic knotted is not well understood. 
While, Theorem \ref{thmn-4} establishes that for $G$ a maximal non-separating planar graph with ten vertices, $\mu(cG)=6$,  this information has no bearing on whether $cG$ is intrinsically knotted.
In \cite{FN}, Flapan and Naimi prove that the IK property is not preserved by the $Y\nabla$ move by showing  a graph in the $K_7$ family which is not intrinsically knotted. 
Since $\mu(K_7)=6$ and both the $\nabla Y$ move and the $Y\nabla$ move preserve $\mu$ for $\mu\ge 4$ \cite{HLS},  this nIK graph has $\mu=6$.
On the other hand,  \cite{F} and  \cite{MNPP} provide examples of IK graphs with $\mu=5$.
 In this section, we do a case by case analysis to prove that  for $G$ a maximal non-separating planar graph with ten vertices, $cG$ is intrinsically knotted.
 We recall  that the $\nabla Y$ move preserves the IK property. 
 In some cases, graphs are shown to be IK because they are obtained through one or more $\nabla Y$ moves from IK graphs such as $K_7$ or $K_{3,3,1,1}$.
 In other cases, graphs $G$ are shown to be IK  because  the graphs obtained from $G$ by one or more $Y\nabla$ moves contain $K_7$ or $K_{3,3,1,1}$ minors.
\begin{lemma} If $G$ is a maximal outerplanar graph with ten vertices then $cG$ is intrinsically knotted.
\label{out10}
\end{lemma}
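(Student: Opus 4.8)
The plan is to show that for any maximal outerplanar graph $G$ on ten vertices, the complement $cG$ is intrinsically knotted by exhibiting, in every such case, a sequence of $\nabla Y$ or $Y\nabla$ moves connecting $cG$ to a known IK graph such as $K_7$ or $K_{3,3,1,1}$. The key structural input is the description of $G$ from the proof of Theorem 1: a maximal outerplanar graph on ten vertices is a triangulated $10$-cycle, and its complement $cG$ is a graph on ten vertices with $\binom{10}{2}-(2\cdot 10-3)=45-17=28$ edges. Since maximal outerplanar graphs on $n$ vertices are not unique (they correspond to distinct triangulations of the $n$-gon), the first step is to organize the finitely many triangulations of the $10$-gon up to the symmetry of the complement, reducing to a manageable list of cases.

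For each case I would first record the complement $cG$ explicitly via its adjacency structure, using the \dfn{2-chord} decomposition from Theorem 1: recursively isolating vertices cut down the triangulation and, dually, control the edges of $cG$. The main tool is that $cG$ on ten vertices is dense enough that I expect each $cG$ to contain a subgraph reducible to $K_7$ or $K_{3,3,1,1}$ after $\nabla Y/Y\nabla$ moves. Concretely, I would look for an induced substructure on seven vertices whose complement within those seven vertices is sparse (a forest or a union of short paths), since then that seven-vertex piece is close to $K_7$; alternatively I would search for the tripartite-plus-two-cone pattern characteristic of $K_{3,3,1,1}$. Because the $\nabla Y$ move preserves the IK property, it suffices to find one IK minor-or-transform in each case, and because $Y\nabla$ applied to $cG$ may expose a $K_7$ or $K_{3,3,1,1}$ minor (as noted in the introduction to this section), I have two directions available for each case.

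The main obstacle will be the case analysis itself: the number of triangulations of the $10$-gon is large (the Catalan number $C_8 = 1430$ before accounting for symmetry), so the essential work is to exploit the dihedral symmetry of the $10$-gon together with the structural rigidity imposed by repeatedly removing ear-isolated vertices, cutting this down to a small number of genuinely distinct complement types. I would argue that the recursive ear-removal from Theorem 1 shows every maximal outerplanar ten-vertex graph is built from a maximal outerplanar seven-vertex graph by attaching three ears, and that the effect on $cG$ of attaching an ear is uniform enough that one can treat the extremal triangulations (the ``fan'' and the ``snake'') and interpolate. Verifying that each resulting $cG$ admits the required $\nabla Y$/$Y\nabla$ transformation to an IK graph is then a finite, if tedious, check, most naturally carried out by explicitly drawing the handful of representative complements and identifying the relevant $K_7$ or $K_{3,3,1,1}$ structure in each; where a direct minor is not visible I would fall back on computationally verifying a doubly linked $D_4$-minor in every embedding, as was done for Theorem~8 using Naimi's program \cite{Na}.
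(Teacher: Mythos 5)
Your proposal correctly identifies the right toolkit (exhibit $K_7$ or $K_{3,3,1,1}$ minors in $cG$, possibly after $\nabla Y$/$Y\nabla$ moves, using that $\nabla Y$ preserves IK), but the plan for organizing the case analysis has a genuine gap, and it is precisely the gap the paper's proof is designed to close. You propose to enumerate triangulations of the $10$-gon up to symmetry (or to reduce via ear-attachment and ``interpolate'' between the fan and the snake). The enumeration route is far from manageable: even modulo the dihedral group there are on the order of eighty distinct triangulations, and nothing in your proposal cuts this down. The interpolation route is not an argument at all --- there is no stated sense in which the effect of attaching an ear on $cG$ is ``uniform,'' and no reason the IK property of the complement would vary monotonically between two extremal triangulations. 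Falling back on machine verification of doubly linked $D_4$-minors for an unspecified list of cases leaves the proof incomplete rather than finishing it.

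The idea you are missing is the one that makes a short case analysis possible: organize by the \emph{longest chord} of the boundary cycle $\mathcal{C}$, not by the whole triangulation. First, a counting observation shows every triangulation of the $10$-cycle has a chord of length $4$ or $5$ (at most six of the seven chords can have length $2$ or $3$). Second --- and this is the crucial mechanism --- chords of an outerplanar triangulation cannot cross, so fixing one long chord, say $v_1v_6$, forces \emph{every} crossing pair (one endpoint in $\{v_2,\dots,v_5\}$, the other in $\{v_7,\dots,v_{10}\}$) to be an edge of $cG$, no matter how the rest of the triangulation is completed. Conditioning on just one or two further chords (present or absent) then pins down enough guaranteed edges of $cG$ to exhibit the desired IK minor in each of a handful of cases; the remaining structure of the triangulation is irrelevant because deleting edges from $G$ only adds edges to $cG$. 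This is how the paper reduces to roughly eight cases, each resolved by a concrete $K_7$, $K_{3,3,1,1}$, or Cousin-12 minor. Without this non-crossing/forcing observation, your case organization does not terminate in a feasible amount of work.
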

\begin{proof}
We label the vertices of $G$ by $v_1, v_2, \ldots, v_9, v_{10}$ in clockwise order around the cycle $\mathcal{C}$ bordering the outer face of a planar embedding. See Figure \ref{fig-10outerplanar1}. 
We organize the proof according to the longest chord of $\mathcal{C}$. 
The length of a chord is defined as the length of the shortest path in $\mathcal{C}$ between the endpoints of the chord.
In each case we show the complement $cG$ contains an intrinsically knotted graph as a minor.
We remark that within any triangulation of the disk bounded by $\mathcal{C}$,  out of a total of seven chords, at most six have length 2 or 3. 
Thus there exists chords of length 4 or 5.

\begin{figure}[htpb!]
\begin{center}
\begin{picture}(530, 115)
\put(0,0){\includegraphics[width=6.6in]{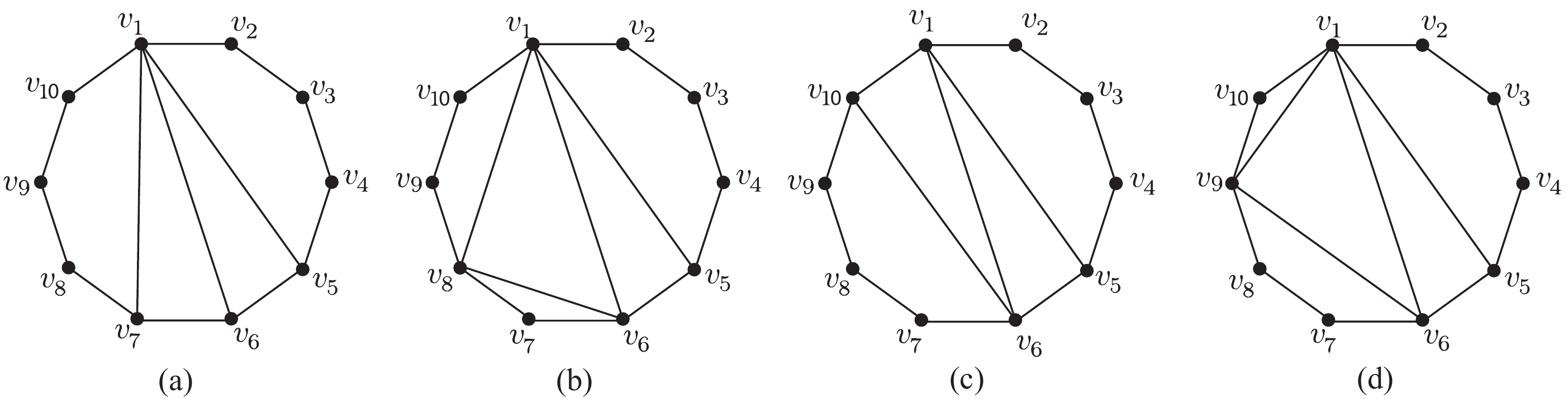}}
\label{fig-10outerplanar1}
\end{picture}
\caption{\small Outerplanar graphs with 10 vertices.}
\label{fig-10outerplanar1}
\end{center}
\end{figure}

\noindent {\textbf{Case (a)}}  If the cycle $\mathcal{C}$ has a chord of length 5, we may assume without loss of generality that $v_1v_6\in E(G)$. 
Consider the cycles $\mathcal{C}_1:=v_1v_6v_7v_8v_9v_{10}$ and $\mathcal{C}_2:=v_1v_2v_3v_4v_5v_6$.
We note that $\mathcal{C}$ necessarily contain a 3-chord or a 4-chord with one endpoint at $v_1$ or $v_6$ and the other endpoint among the vertices of $\mathcal{C}_i$, for $i=1,2$.
We distinguish six cases, according to whether there are any 4-chords at all and whether these chords share one of their ends.

\noindent {\bf{(a1)}}  Assume there exists a 4-chord incident to $v_1$ or $v_6$, say $v_1v_5\in E(G)$.
\begin{enumerate}
\item[(i)] If $v_1v_7\in E(G)$ (see Figure \ref{fig-10outerplanar1}(a)), then the complement $cG$ contains as a subgraph  the graph obtained through two $\nabla Y$-moves from $K_7$ with vertex set $\{v_2, v_3, v_4, v_8, v_9,  v_{10}, v_6\}$: one  $\nabla Y$-move over the triangle $v_2v_3v_4$ with new vertex $v_7$ and one $\nabla Y$-move over the triangle $v_8v_9v_{10}$ with new vertex $v_5$. 
\item[(ii)] If  $v_1v_7\notin E(G)$, and $v_1v_8\in E(G)$ (see Figure \ref{fig-10outerplanar1}(b)), then in  $cG$ delete any edges incident to $v_5$ except $v_5v_8$, $v_5v_9$ and $v_5v_{10}$, then perform a $Y\nabla$-move at $v_5$ to create a graph containing the triangle $v_8v_9v_{10}$. 
This graph contains a $K_{3,3,1,1}$ minor with partition $\{v_2, v_3, v_4\}, \{v_6, v_7, v_8\}, \{v_9\}, \{v_{10}\}$. 

\item[(iii)] If $v_6v_{10} \in E(G)$ (see Figure \ref{fig-10outerplanar1}(c)), then in $cG$ delete any edges incident to $v_1$ except $v_1v_7$, $v_1v_8$ and $v_1v_9$, then perform a $Y\nabla$-move at $v_1$ to create a graph containing the triangle $v_7v_8v_9$. 
Further, delete any edges incident to $v_6$ except $v_2v_6$, $v_3v_6$ and $v_4v_6$, then perform a $Y\nabla$-move at $v_6$ to create a graph containing the triangle $v_2v_3v_4$.  
Within this new graph, contract $v_5v_{10}$ to a new vertex $t$ to obtain a $K_7$ minor with vertices $\{v_2, v_3, v_4, v_7, v_8, v_9, t\}$.

\item[(iv)] If  $v_6v_{10} \notin E(G)$ and $v_6v_9\in E(G)$, (see Figure \ref{fig-10outerplanar1}(d)), then  
in $cG$ delete any edges incident to  $v_6$ except $v_6v_2$, $v_6v_3$ and $v_6v_4$, and perform a $Y\nabla$-move at $v_6$ to create a graph containing the triangle $v_2v_3v_4$. 
Within this new graph contract the edge $v_5v_9$ to a vertex $t$, and contract the edge $v_1v_7$ to a vertex $t_7$ to obtain a $K_7$ minor with vertices $\{v_2, v_3, v_4, t_7, v_8, v_{10}, t\}$.

\end{enumerate}

\begin{figure}[htpb!]
\begin{center}
\begin{picture}(530, 110)
\put(0,0){\includegraphics[width=6.6in]{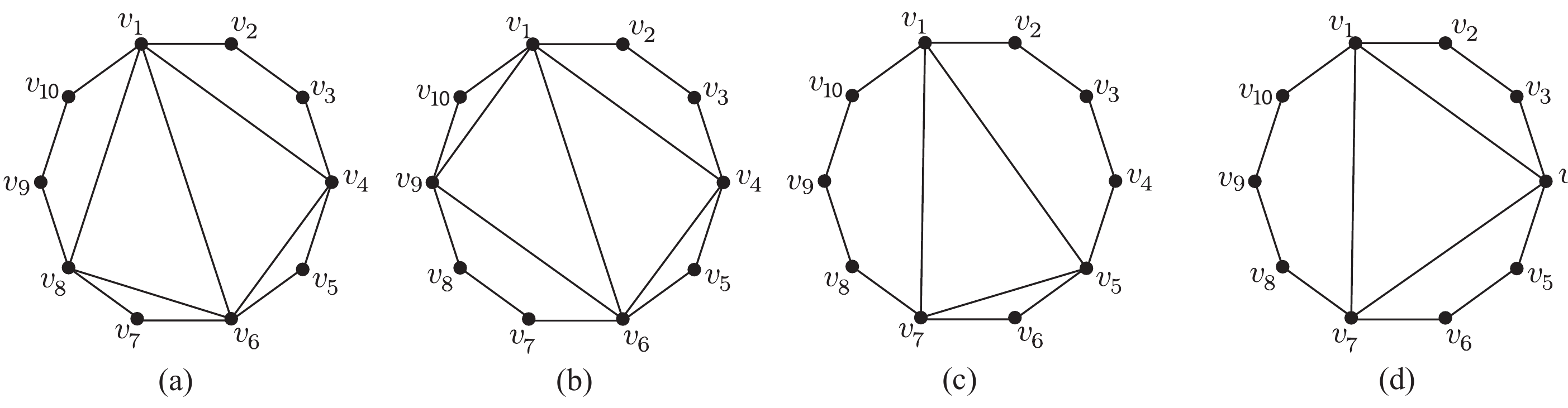}}
\label{fig-10outerplanar2}
\end{picture}
\caption{\small Outerplanar graphs with 10 vertices.}
\label{fig-10outerplanar2}
\end{center}
\end{figure}

\noindent {\bf{(a2)}} Assume there is no 4-chord of $\mathcal{C}$ incident to $v_1$ or $v_6$. There  are two  3-chords of  $\mathcal{C}$  incident to $v_1$ or $v_6$ and endpoints  in each $\mathcal{C}_1$ and $\mathcal{C}_2$. Assume $v_1v_4\in E(G)$.
\begin{enumerate}
\item[(i)] If $v_1v_8\in E(G)$ (see Figure \ref{fig-10outerplanar2}(a)), 
for any choice of edges which triangulate the quadrilaterals $v_1v_2v_3v_4$ and $v_8v_9v_{10}v_1$, the complement $cG$ contains as a subgraph the graph  Cousin 12 of $K_{3,3,1,1}$ described in \cite{GMN}.
This is a minor minimal IK graph with 9 vertices obtained from $K_{3,3,1,1}$ by two $\nabla Y$-moves followed by a  $Y\nabla$-move.
\item[(ii)] If  $v_6v_9\in E(G)$ (see Figure \ref{fig-10outerplanar2}(b)), 
obtain a $K_7$ minor of $cG$ by contracting the edges $v_1v_8$, $v_2v_6$ and $v_4v_9$.
\end{enumerate}

\noindent {\textbf{Case (b)}}  Assume the cycle $\mathcal{C}$ has no chord of length 5.
Then it has at least a chord of length 4. Assume $v_1v_7\in E(G) $. Up to symmetry, we recognize two cases.\\
\noindent {\bf{ (b1)}} If $v_1v_5\in E(G)$ (see Figure  \ref{fig-10outerplanar2}(c)), then 
the complement $cG$ contains the graph obtained through two $\nabla Y$-moves from $K_7$ with vertex set $\{v_2, v_3, v_4, v_6, v_8, v_9, v_{10}\}$: one  $\nabla Y$-move over the triangle $v_2v_3v_4$ with new vertex $v_7$ and one $\nabla Y$-move over the triangle $v_8v_9v_{10}$ with new vertex $v_5$. 

\noindent {\bf{(b2)}} If $v_1v_4, v_4v_7\in E(G)$ (see Figure  \ref{fig-10outerplanar2}(d)),  then  in $cG$ delete any edge incident to  $v_4$ except $v_4v_8$, $v_4v_9$ and $v_4v_{10}$, then perform a $Y\nabla$-move at $v_4$ to create a graph containing the triangle $v_8v_9v_{10}$.
Within this graph contract edges $v_1v_5$ to $t_5$ and $v_2v_7$ to $t_2$ obtain a $K_7$ with vertex set $\{t_2, v_3, t_5, v_6, v_8, v_9, v_{10}\}$.

 \end{proof}

\begin{lemma} If $G$ is a wheel  with ten vertices then $cG$ is intrinsically knotted.
\label{w10}
\end{lemma}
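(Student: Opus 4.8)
The plan is to compute the complement $cG$ explicitly and then realize a known IK graph either as a minor or via $\nabla Y$/$Y\nabla$ moves from $K_7$ or $K_{3,3,1,1}$. First I would set up notation: let $G=W_{10}$ be the wheel with hub $h$ adjacent to all rim vertices $v_1,\dots,v_9$, which form the cycle $C_9$. In the complement, $h$ becomes an isolated vertex, so $cG\simeq c C_9 \cup K_1$, where $cC_9$ is the complement of the $9$-cycle on $\{v_1,\dots,v_9\}$. Since the isolated vertex contributes nothing to intrinsic knottedness, it suffices to show that $cC_9$ is intrinsically knotted. The graph $cC_9$ is quite dense: each rim vertex $v_i$ is non-adjacent in $G$ only to its two cyclic neighbors, so in $cC_9$ every vertex $v_i$ has degree $6$, being adjacent to all $v_j$ except $v_{i\pm 1}$.

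The main step is to locate an IK subgraph or minor inside this $6$-regular graph on nine vertices. I would try to exhibit a $K_7$ minor directly, exploiting the high connectivity. One natural approach: pick seven of the nine vertices so that among them the only missing edges (the cyclic-neighbor non-edges) can be repaired by contracting the remaining two vertices. Concretely, I would contract two of the rim vertices that are chosen to supply, via contraction, the edges missing among the other seven. Because each non-edge $v_iv_{i+1}$ of $cC_9$ corresponds to a cycle edge of $C_9$, and these form a single $9$-cycle, I expect that contracting a well-chosen pair of vertices can eliminate all the problematic non-adjacencies among the surviving seven vertices, producing a $K_7$ minor. Since $K_7$ is IK and the minor relation preserves IK, this would finish the proof. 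If a direct $K_7$ minor is awkward, the fallback is the $\nabla Y$/$Y\nabla$ strategy used in Lemma \ref{out10}: identify a triangle among the $v_i$ in $cC_9$, perform a $Y\nabla$ or $\nabla Y$ move as needed, and then extract $K_7$ or $K_{3,3,1,1}$ as a minor, invoking that $\nabla Y$ preserves IK and that a $Y\nabla$ image containing $K_7$ or $K_{3,3,1,1}$ forces the original to be IK.

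I would carry this out by first writing down the adjacency of $cC_9$ cleanly, then attempting the explicit contraction. For instance, contracting $v_9$ into $v_1$ (they are non-adjacent, so this is an edge contraction in the sense of identifying and, if one insists on a genuine minor, one may instead contract an actual edge such as $v_1v_4$ of $cC_9$ and track the resulting adjacencies). The bookkeeping of which non-edges survive among the chosen seven vertices after two contractions is the crux, and I would verify that no non-edge remains. The hard part will be making the two contractions remove \emph{all} of the cycle non-edges among the seven retained vertices simultaneously; because the non-edges lie on a single $9$-cycle, a pair of contractions covers only a bounded number of consecutive non-edges, so the choice of which two vertices to contract and which seven to keep must be made carefully, possibly breaking into a couple of subcases depending on the cyclic positions. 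Once the $K_7$ minor (or the $K_{3,3,1,1}$ minor obtained through a $Y\nabla$ move) is in hand, the conclusion that $cG$ is intrinsically knotted is immediate, since the extra isolated vertex $h$ cannot affect the presence of an IK minor.
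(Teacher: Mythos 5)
Your reduction to $cC_9$ (the hub of the wheel is isolated in $cG$, so everything rests on the complement of the $9$-cycle) is correct and matches the paper's own set-up, but the certificate you then hunt for provably does not exist: $cC_9$ has \emph{no} $K_7$ minor. Indeed, a $K_7$ minor in a $9$-vertex graph uses seven disjoint connected branch sets; since $2k+(7-k)\le 9$ forces at most $k\le 2$ branch sets of size two or more, at least five branch sets are singletons, and these five vertices must be pairwise adjacent in $cC_9$, i.e.\ pairwise non-adjacent in $C_9$, i.e.\ an independent set of size $5$ in $C_9$. But the independence number of $C_9$ is $4$. So the step you yourself flag as ``the crux'' (choosing two contractions that repair all cycle non-edges among seven retained vertices) is not merely delicate, it is impossible. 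The same resource count kills a $K_{3,3,1,1}$ minor: eight branch sets among nine vertices force at least seven singletons, and the non-edges of $cC_9$ induced on any seven or eight vertices form a union of paths containing a path on four vertices, which cannot embed into the non-adjacency graph $2K_3\cup 2K_1$ of $K_{3,3,1,1}$.

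Your fallback via $Y\nabla$ moves fails for the same structural reason. The non-edges of $cC_9$ form a $9$-cycle; a $Y\nabla$ move (after edge deletions) removes one vertex, killing at most the two cycle non-edges at that vertex, and the inserted triangle fills at most two more, since no three vertices of $C_9$ are pairwise consecutive. Hence after one move at least five non-edges survive, always including a path on four or more vertices (so no $K_{3,3,1,1}$ can appear, even as a spanning subgraph of the resulting $8$-vertex graph), and after two moves at least one non-edge survives (so the resulting $7$-vertex graph is never $K_7$). In short, no sequence of deletions, contractions, and $Y\nabla$ moves starting from $cW_{10}$ can ever reach $K_7$ or $K_{3,3,1,1}$: the complement of the $10$-wheel is intrinsically knotted for reasons invisible to these two certificates. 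This is precisely why the paper argues differently: it invokes the minor-minimal intrinsically knotted graph $E_9+e$ on nine vertices from \cite{GMN} (which, being minor-minimal IK and distinct from $K_7$ and $K_{3,3,1,1}$, contains neither as a minor) and checks that the complement of $E_9+e$ in $K_{10}$ contains the $10$-wheel as a subgraph, whence $cG\supseteq E_9+e$ and $cG$ is IK. To repair your proof you would need an IK certificate outside the $K_7$ and $K_{3,3,1,1}$ families---for instance exhibiting $E_9+e$ inside $cC_9$, or running a Foisy-type doubly-linked $D_4$ argument---rather than any amount of case analysis with the certificates you propose.
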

\begin{proof}
The graph  $E_9+ e$ is a minor minimal intrinsically knotted graph with 9 vertices  described in \cite{GMN}.
 The complement of  $E_9+ e $ in $K_{10}$  contains the 10-wheel as a subgraph. 
 See Figure \ref{fig-wheel10}.
Thus, the complement $cG$ contains $E_9+e$ as a subgraph and therefore  it is intrinsically knotted.
\end{proof}

\begin{figure}[htpb!]
\begin{center}
\begin{picture}(300, 120)
\put(0,0){\includegraphics[width=4in]{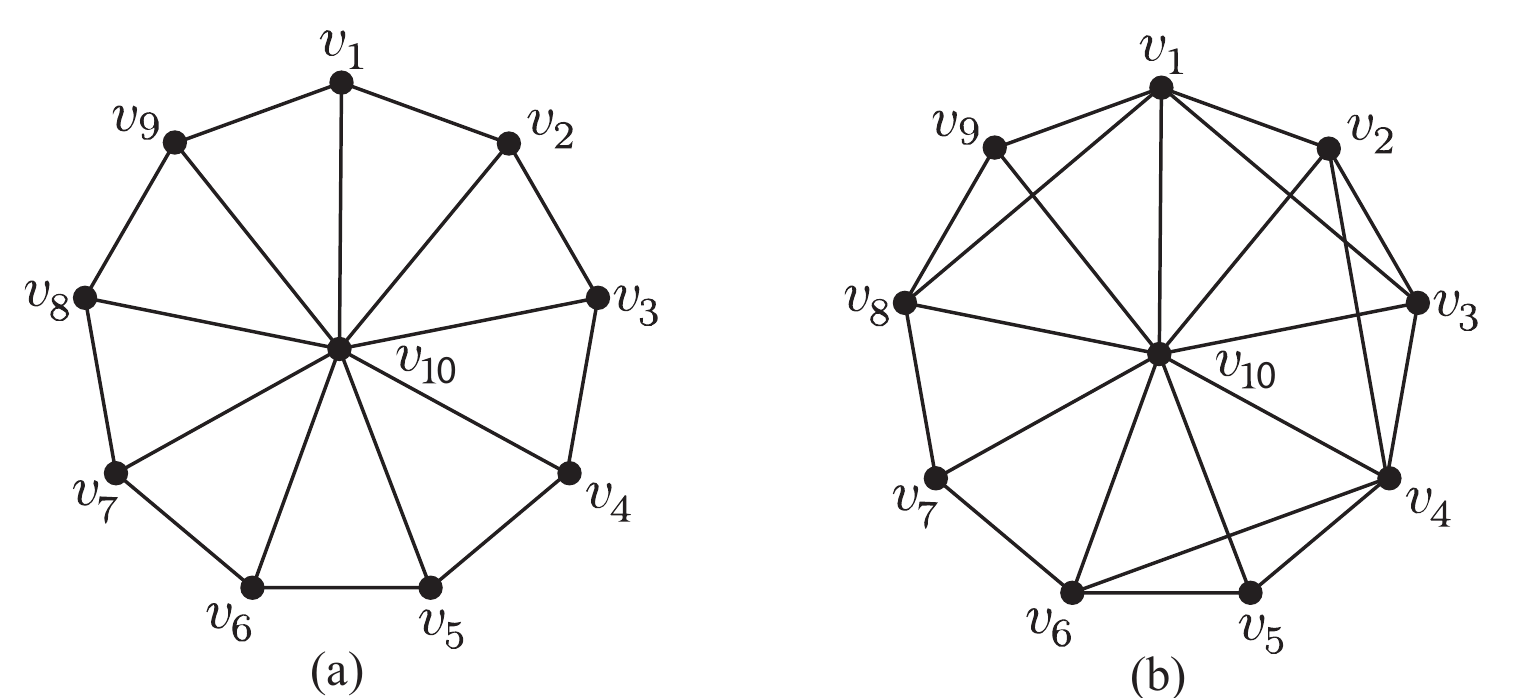}}
\end{picture}
\caption{\small (a) wheel graph with  ten vertices, (b) complement of $E_9+e$} in $K_{10}$.
\label{fig-wheel10}
\end{center}
\end{figure}

\begin{lemma} If $G$ is an elongated  triangular prism with ten vertices then $cG$ is intrinsically knotted.
\label{tp10}
\end{lemma}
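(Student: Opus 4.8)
The plan is to follow the same case-analysis strategy used in Lemmas \ref{out10} and \ref{w10}: an elongated triangular prism on ten vertices is obtained from the $6$-vertex prism by subdividing one, two, or all three of the non-triangular edges, so I would enumerate the possible subdivision patterns (the total number of subdivision vertices is four, distributed among at most three edges) and for each pattern exhibit an intrinsically knotted minor of $cG$. Because $G$ is fixed up to the choice of how many vertices lie on each long edge, there are only finitely many combinatorial types to check, and for each I expect to produce either a $K_7$ minor or a $K_{3,3,1,1}$ minor in $cG$, possibly after a sequence of $\nabla Y$- and $Y\nabla$-moves.

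\textbf{Setup and reduction.} First I would fix notation for the prism: let $v_1v_3v_5$ and $a,b,c$ be the two triangular faces (as in Figure \ref{fig-eprism}(a)), with the three non-triangular edges joining $v_1$ to $a$, $v_3$ to $b$, and $v_5$ to $c$, each possibly subdivided by one or more degree-two vertices. The key structural fact I would exploit is that in $cG$ two subdivision vertices that are non-adjacent in $G$ become adjacent, and a degree-two subdivision vertex of $G$ has degree $n-3$ in $cG$; this tends to make $cG$ very dense, so finding a dense IK minor should be feasible. I would also record which vertices of $G$ have small degree, since their complements are high-degree in $cG$ and serve as natural cone vertices or as sites for $Y\nabla$-moves.

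\textbf{Case analysis by subdivision pattern.} The cases are: (1) one edge carries all four subdivision vertices; (2) the four vertices split as $3+1$ or $2+2$ across two edges; (3) they split as $2+1+1$ across all three edges. For each I would identify six or seven vertices whose induced subgraph (or a contraction/expansion of it) in $cG$ is a known IK graph. Guided by Theorem 2's Case analysis, when two of the three long edges are subdivided I expect that deleting the two endpoint vertices of a triangular face leaves a path or near-path, whose complement cones up to the required minor; concretely I would look to contract edges among the subdivision vertices and the $a,b,c$ triangle to collapse $cG$ down to $K_7$, or to perform a $Y\nabla$-move at a vertex of $cG$ of degree exactly three to create a triangle completing a $K_{3,3,1,1}$ minor, exactly as in cases (a1)(ii)--(iv) and (b2) of Lemma \ref{out10}.

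\textbf{Main obstacle.} The hard part will be the most balanced subdivision pattern $2+1+1$ on all three edges (the case closest to the graph $P'$ of Figure \ref{3prism}(a) treated via the doubly linked $D_4$ criterion), because there the complement is least obviously reducible to a small complete or complete-tripartite minor by hand. For that subcase I anticipate either appealing to the $D_4$ doubly-linked-minor argument of Theorem 6 (invoking Foisy's criterion \cite{Foisy} and the computation in \cite{Na}) or exhibiting an explicit sequence of $\nabla Y$/$Y\nabla$-moves producing a Petersen-family-plus-apex structure; verifying that the chosen contractions do not accidentally identify two vertices that are adjacent, and that the resulting minor really is one of $K_7$, $K_{3,3,1,1}$, or a known cousin from \cite{GMN}, is the step requiring the most care.
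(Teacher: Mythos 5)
Your case enumeration (four subdivision vertices distributed as $4$, $3{+}1$, $2{+}2$, or $2{+}1{+}1$ over the non-triangular edges) matches the paper's exactly, and $K_{3,3,1,1}$ is indeed the right target minor. But the proposal stops at a plan: the entire content of the lemma is the exhibition of explicit minors in each case, and that is precisely what is deferred ("I would identify six or seven vertices\dots", "I expect to produce\dots"). In the paper, every one of the four cases is settled by contracting exactly \emph{two} edges of $cG$ and writing down an explicit $K_{3,3,1,1}$ partition; no $\nabla Y$- or $Y\nabla$-moves, no $K_7$, and no cousins from \cite{GMN} are needed anywhere in this lemma, so the heavier machinery you import from Lemma \ref{out10} is not merely unexecuted but unnecessary.

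More seriously, your assessment of where the difficulty lies is inverted, and your proposed fallback for the "hard" case would fail. The $2{+}1{+}1$ pattern is in fact the cleanest case: since all three rungs are subdivided, no vertex of the triangle $v_1v_3v_5$ is adjacent in $G$ to any vertex of the other triangle, so all nine cross edges between $\{v_1,v_3,v_5\}$ and $\{v_2,v_4,v_6\}$ are present in $cG$ (here $v_2,v_4,v_6$ denote the second triangle in the paper's labeling of Figure \ref{fig-prism10}); contracting two edges of $cG$ joining subdivision vertices on different rungs ($ac$ to $t$ and $bd$ to $u$) produces the two apex vertices, giving $K_{3,3,1,1}$ with partition $\{v_1,v_3,v_5\},\{v_2,v_4,v_6\},\{t\},\{u\}$ immediately. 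Your alternative --- invoking Foisy's doubly linked $D_4$ criterion together with the computation in \cite{Na} as used for Theorem 6 --- does not transfer: that computation was performed on $S\simeq P'+P_2$, a twelve-vertex graph obtained by coning an elongated prism with two extra vertices, not on the ten-vertex complement $cG$ at issue here, so citing it proves nothing about $cG$ and you would need a fresh machine verification where a two-line hand argument suffices.
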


\begin{proof}
An elongated  prism with ten vertices is obtained by subdividing the three non-triangular edges of the prism  with four vertices.  
These four vertices can be added in four different ways: (a) on three different edges; (b) on two edges with a 2-2 partition; (c) on two edges with a 3-1 partition; (d) all on one edge.
See Figure \ref{fig-prism10}.
In each case, we show that $cG$ contains a $K_{3,3,1,1}$ minor.

\begin{figure}[htpb!]
\begin{center}
\begin{picture}(480, 70)
\put(-10,0){\includegraphics[width=6.8in]{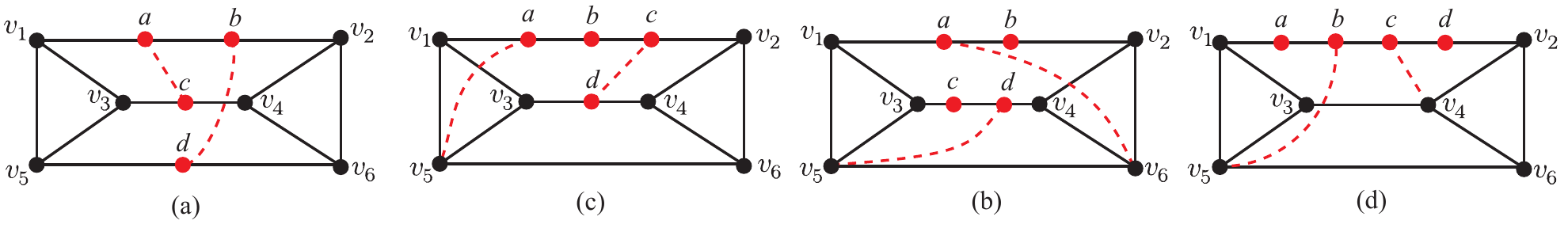}}
\end{picture}
\caption{\small Elongated prisms with ten vertices. Dashed edges are edges of the complement graph.}
\label{fig-prism10}
\end{center}
\end{figure}

\noindent {\textbf{Case (a)}} The four vertices are added  on three different edges of the elongated prism, as in Figure \ref{fig-prism10}(a). 
Within $cG$, contract edge $ac$ to vertex $t$ and  contract edge $bd$ to vertex $u$ to obtain a $K_{3,3,1,1}$ minor of $cG$ given by the partition $\{v_1,v_3, v_5\}, \{ v_2, v_4, v_6\}, \{t\}, \{u\}$.\\
 {\textbf{Case (b)} } The four vertices are added to two edges of the elongated prims with a 2-2 partition,  as in Figure \ref{fig-prism10}(b). 
Within $cG$, contract edge $dv_5$ to vertex $t_5$ and  contract edge $av_6$ to vertex $t_6$ to obtain a $K_{3,3,1,1}$ minor of $cG$ given by the partition $\{v_1,v_3, c\}, \{ v_2, v_4, b\}, \{t_5\}, \{t_6\}$.\\
 {\textbf{Case (c)} } The four vertices are added to two edges of the elongated prims with a 3-1 partition,  as in Figure \ref{fig-prism10}(c). 
Within $cG$, contract edge $av_5$ to vertex $t_5$ and contract edge $cd$ to vertex $t$ to obtain a $K_{3,3,1,1}$ minor of $cG$ given by the partition $\{v_1,v_3,t_5\}, \{ v_2, v_4, v_6 \}, \{b\}, \{t \}$.\\
 {\textbf{Case (d)}} The four vertices are added all one one edge of the elongated prism, as in Figure \ref{fig-prism10}(d). 
Within $cG$, contract the edge $bv_5$ to  vertex $t_5$   and  the edge $cv_4$ to vertex $t_4$ to obtain a $K_{3,3,1,1}$ minor of $cG$ given by the partition $\{v_1,v_3, a\}, \{ v_2, v_6, d \}, \{t_4\}, \{t_5\}$.

\end{proof}

\noindent Since for  $H$  a subgraph of $G$ of the same order,  $cG\subseteq cH$, 
Lemmas \ref{out10}, \ref{w10} and \ref{tp10}  give the following theorem.\\

\noindent \textbf{Theorem 3.} \textit{If  $G$ is a  non-separating planar graph on 10 vertices, then $cG$ is intrinsically knotted.}

\begin{corollary}
For $n\ge10$, the complement of a non-separating planar graph on $n$ vertices is IK.
\label{cor10}
\end{corollary}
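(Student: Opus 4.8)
The plan is to reduce the statement for general $n$ to the case $n=10$, which is Theorem \ref{thm10}, by a purely formal subgraph argument. Two elementary observations are needed. First, the class of non-separating planar graphs is closed under taking subgraphs: if $\Gamma$ carries an embedding in $\mathbb{R}^2$ witnessing the non-separating property and $H\subseteq\Gamma$, then the restricted embedding works for $H$, because every cycle of $H$ is a cycle of $\Gamma$ and the vertices of $H$ lying off that cycle form a subset of the vertices of $\Gamma$ off it, hence lie in a single component of the complement. Second, the IK property passes to supergraphs: any spatial embedding of a graph restricts to an embedding of each of its subgraphs, so a graph containing an IK subgraph is itself IK.

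Given these, fix $n\ge 10$ and let $G$ be non-separating planar on $n$ vertices. Choose any $10$-element subset $S\subseteq V(G)$ and consider the induced subgraph $G[S]$. By the first observation $G[S]$ is non-separating planar, and it has exactly ten vertices, so Theorem \ref{thm10} applies and $c(G[S])$ is IK. The crucial bookkeeping point is that complementation commutes with passing to induced subgraphs on a fixed ground set: for $v_i,v_j\in S$, the edge $v_iv_j$ lies in $c(G[S])$ iff it is absent from $G[S]$, iff it is absent from $G$ (since $G[S]$ is induced), iff it lies in $cG$; hence $c(G[S])=(cG)[S]$. Thus $cG$ contains the IK graph $c(G[S])$ as an induced subgraph, and by the second observation $cG$ is IK. (Equivalently, one may phrase this as an induction on $n$ with base case $n=10$, deleting a single vertex at each step and using $c(G-v)=(cG)-v$.)

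The argument involves no genuine obstacle: all of the topological content is already carried by Theorem \ref{thm10}, and what remains is the verification of the two closure properties together with the identity $c(G[S])=(cG)[S]$, each immediate from the relevant definitions. I would take care to derive the subgraph-closure of the non-separating property directly from its definition, rather than from the Dehkordi--Farr trichotomy, so as to sidestep a case analysis over the three families and keep the deduction entirely self-contained.
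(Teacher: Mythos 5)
Your proof is correct and is essentially the paper's own (implicit) argument: the paper states Corollary \ref{cor10} as an immediate consequence of Theorem \ref{thm10}, with exactly the reduction you spell out — a non-separating planar graph restricted to any ten vertices remains non-separating planar, complementation commutes with passing to induced subgraphs, and IK passes to supergraphs. Your write-up simply makes explicit the closure properties the paper takes for granted.
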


\begin{remark}
The bound  $n\ge 10$ in Corollary \ref{cor10} is the best possible.
If $G$ is the 9-wheel, then $cG\setminus v$ =$K_8\setminus C_8$. 
Here $v$ is the isolated point within the complement of the wheel.
Since it has 20 edges, $K_8\setminus C_8$ is 2-apex and it is therefore knotlessly embeddable \cite{M}.
\end{remark}

\noindent \textbf{Aknowledgments.} The authors would like to thank Hooman Dehkordi, Graham Farr, and Ramin Naimi for  the helpful conversations.

\end{document}